\newtheorem{proposition}{Proposition}
\newtheorem{theorem}{Theorem}
\newtheorem{lemma}{Lemma}
\def\square{{\setbox0=\hbox{X}\hbox to \ht0{\vrule\hss\vbox to \ht0{
  \hrule width \ht0\vfil\hrule width \ht0}\vrule}}}
\newcommand{\R}{{\mathbb R}}
\newcommand{\Z}{{\mathbb Z}}
\newcommand{\krn}[1]{\text{ker}(#1)}
\newcommand{\ran}[1]{\text{ran}(#1)}
\newcommand{\argmin}[1]{\text{argmin}\,#1}
\begin{document}

\vskip 2cm \begin{center}{\huge An Active Set Algorithm for Robust Combinatorial Optimization Based on Separation Oracles }\end{center}
\par\bigskip
\centerline{\Large C. Buchheim$^\dag$ and M. De Santis$^*$}
\par\bigskip\bigskip

\centerline{$^{\dag}$Fakult\"at f\"ur Mathematik } \centerline{TU
    Dortmund } \centerline{Vogelpothsweg 87 - 44227 Dortmund - Germany}
\par\medskip
\centerline{ $^{*}$ Dipartimento di Ingegneria Informatica, Automatica e Gestionale}
\centerline{Sapienza, Universit\`a  di Roma}\centerline{Via Ariosto, 25 - 00185 Roma -Italy}
\par\medskip
\centerline{e-mail (Buchheim): christoph.buchheim@math.tu-dortmund.de}
\centerline{e-mail (De Santis): marianna.desantis@uniroma1.it}

\par\bigskip\noindent {\small \centerline{\bf Abstract} 
   We address combinatorial optimization problems with uncertain
  coefficients varying over ellipsoidal uncertainty sets. The robust
  counterpart of such a problem can be rewritten as a second-oder cone
  program (SOCP) with integrality constraints. We propose a
  branch-and-bound algorithm where dual bounds are computed by means
  of an active set algorithm. The latter is applied to the Lagrangian
  dual of the continuous relaxation, where the feasible set of the
  combinatorial problem is supposed to be given by a separation
  oracle.  The method benefits from the closed form solution of the
  active set subproblems and from a smart update of pseudo-inverse
  matrices. We present numerical experiments on randomly generated
  instances and on instances from different combinatorial problems,
  including the shortest path and the traveling salesman problem,
  showing that our new algorithm consistently outperforms the
  state-of-the art mixed-integer SOCP solver of Gurobi.
  
\par\bigskip\noindent
{\bf Keywords.} Robust Optimization, Active Set Methods, SOCP

\pagestyle{plain} \setcounter{page}{1}

\section{Introduction}

We address combinatorial optimization problems given in the general
form
\begin{equation}\label{CP}\tag{\textnormal{CP}}
\min_{x\in P\cap\Z^n}\; c^\top x
\end{equation}
where $P\subseteq \R^n$ is a compact convex set, say $P\subseteq
[l,u]$ with $l,u\in\R^n$, and the objective function vector~$c\in
\R^n$ is assumed to be uncertain.  This setting appears in many
applications where the feasible set is certain, but the objective
function coefficients may have to be estimated or result from imprecise
measurements. As an example, when searching for a shortest path in a
road network, the topology of the network is usually considered fixed,
but the travel times may vary depending on the traffic
conditions.

A classical way of dealing with uncertain optimization problems is the
strictly robust optimization approach, introduced in~\cite{bental99}
for linear programming and in~\cite{bental98} for general convex
programming; we also refer the reader to the book by Ben-Tal and
Nemirovski~\cite{bental2001}.  In strictly robust optimization, we
look for a worst-case solution, where the uncertain parameter~$c$ is
assumed to belong to a bounded set~$U\subseteq \R^{n}$, called the
\emph{uncertainty set}, and the goal of the robust counterpart is to
compute the solution of the following min-max problem:
\begin{equation}\label{RP}\tag{\textnormal{RP}}
\min_{x\in P\cap \Z^n}\;\max_{c\in U}\; c^\top x 
\end{equation}
A natural choice in this approach are ellipsoidal uncertainty sets,
defined as
\[U = \{c\in \R^n \mid (c-{\bar c})^\top M (c-{\bar c})\le 1\},\]
where $M\in\R^{n\times n}$ is a symmetric positive definite matrix
and~${\bar c}\in\R^n$ is the center of the ellipsoid. Assuming that
the uncertain vector~$c$ in~\eqref{CP}, considered as a random
variable, follows a normal distribution, we can interpret the
ellipsoid~$U$ as a confidence set of~$c$; in this case,~$M$ is the
inverse covariance matrix of~$c$ and ${\bar c}$ is its expected
value. Unfortunately, for ellipsoidal
uncertainty sets, the robust counterpart~\eqref{RP} is usually much
harder to solve than the original problem~\eqref{CP}: it is known
that Problem~\eqref{RP} is NP-hard in this case for the
shortest path problem, for the minimum spanning tree problem, and for
the assignment problem~\cite{kouvelis} as well as for the
unconstrained binary optimization problem~\cite{survey}.

Even in the case of a diagonal
matrix~$M$, i.e., when ignoring correlations and only taking variances
into account, no polynomial time algorithm for the robust shortest
path problem is known. There exists however an FPTAS for the diagonal
case whenever the underlying problem~\eqref{CP} admits an
FPTAS~\cite{nikolova}, and polynomial time algorithms for the minimum
spanning tree problem and the unconstrained binary problem have been
devised for the diagonal case.

For general ellipsoids~$U$, most exact solution approaches
for~\eqref{RP} are
based on solving SOCPs. In fact, it is easy to see
that the optimal solution of the inner maximization problem \[\max_{c\in U}\; c^\top x\]
for fixed~$x$ is given by 
\[{{\bar c}}^\top x + \sqrt{x^\top M^{-1} x}.\]
Therefore, Problem~\eqref{RP} is equivalent to the integer
non-linear problem
\begin{equation}\label{ellvILP}\tag{\textnormal{P}}
  \begin{array}{l l}
    \min & f (x) = c^{\top}x+\sqrt{x^{\top}Qx}\\
    \textnormal{ s.t. }& x \in P\cap\Z^n
  \end{array}
\end{equation}
where $Q\in\R^{n\times n}$ is the symmetric and positive definite
inverse of $M$ and we replace $\bar c$ by $c$ for ease of notation. Note
that, when
addressing so called value-at-risk models
\[
\begin{array}{ll}
  \min &  z \\
  \textnormal{ s.t.} & \text{Pr}(c^\top x\ge z)\le \varepsilon\\
  & x\in P\cap\Z^n\;,
\end{array}
\]
we arrive at essentially the same
formulation~\eqref{ellvILP}, assuming normally distributed coefficients again; see,
e.g.,~\cite{nikolova}.

In the following, we assume that the convex set~$P$ is given by a
separation algorithm, i.e., an algorithm that decides whether a given
point~$\bar x\in\R^n$ belongs to $P$ or not, and, in the negative
case, provides an inequality~$a^\top x\le b$ valid for~$P$ but
violated by~$\bar x$. Even in cases where the underlying
problem~\eqref{CP} is tractable, the polytope~$\text{conv}(P\cap
\Z^n)$ may have an exponential number of facets, so that a full linear
description cannot be used efficiently. This is true, e.g., for the
standard formulation of the spanning tree problem. However, we do not
require that a complete linear description of $\text{conv}(P\cap
\Z^n)$ be known; it suffices to have an integer linear description,
i.e., we allow~$P\neq\text{conv}(P\cap\Z^n)$. In particular, our
approach can also be applied when the underlying problem is
NP-hard, e.g., when~\eqref{CP} models the traveling salesman problem.

As soon as $P$ is given explicitly by linear constraints $Ax\le b$
with~$A\in\R^{m\times n}$ and~$b\in\R^m$, the continuous relaxation of
Problem~\eqref{ellvILP}
reduces to an SOCP of the form
\begin{equation}\label{ellLP}\tag{\textnormal{R2}}
      \begin{array}{l l}
        \min & c^{\top}x+\sqrt{x^{\top}Qx}\\
        \textnormal{ s.t. }& Ax \leq b\\
        & x\in\R^n\;.
      \end{array}
\end{equation}
Such SOCPs can be solved efficiently using interior point
algorithms~\cite{NN1993} and popular solvers for SOCPs such as
SeDuMi~\cite{S98guide} or MOSEK~\cite{mosek} are based on interior
point methods.  However, in our branch-and-bound algorithm, we need to
address a sequence of related SOCPs. Compared with interior point
methods, active set methods have the advantage to allow warmstarting
rules.

For this reason, in order to solve the SOCP relaxations of
Problem~\eqref{RP}, we devised the active set
algorithm~\texttt{EllAS}. It is applied to the Lagrangian dual
of~\eqref{ellLP} and exploits the fact that the active set subproblems
can be solved by closed form expressions.  For this, the main
ingredient is the pseudo-inverse of~$AQ^{-\frac 12}$. Since the
matrix~$A$ is updated in each iteration of the active set method, an
incremental update of the pseudo-inverse is crucial for the running
time of~\texttt{EllAS}. Altogether, we can achieve a running time of
$O(n^2)$ per iteration. Combined with an intelligent embedding into
the branch-and-bound scheme, we obtain an algorithm that consistently
outperforms the MISOCP solver of Gurobi~7.5.1, where the latter is either
applied to a full linear description of~$P$ or, in case a compact linear
description does not exist, uses
the same separation oracle as~\texttt{EllAS}.

The rest of the paper is organized as follows: the Lagrangian dual
of~\eqref{RP} is derived in Section~\ref{sec:dualProb}. The
closed-form solution of the resulting active set subproblems is
developed in Section~\ref{sec:solSubp}. The active set
algorithm~\texttt{EllAS} is detailed and analyzed in
Sections~\ref{sec:Alg} and~\ref{sec:conv}. In Section~\ref{sec:bb}, we
discuss how to embed~\texttt{EllAS} into a branch-and-bound
algorithm. Numerical results for random integer instances as well as
instances of different combinatorial optimization problems are
reported in Section~\ref{sec:num}. Section \ref{sec:conc} concludes.

\section{Dual problem}\label{sec:dualProb}

The algorithm we propose for solving Problem~\eqref{RP} uses the
Lagrangian dual of relaxations of the form~\eqref{ellLP}.  Let
$\mathscr{L}(x,\lambda) : \R^n\times \R^m \rightarrow \R$ be the
Lagrangian function associated to~\eqref{ellLP}:
\begin{equation*}
  \mathscr{L}(x,\lambda) = c^\top x+\sqrt{x^{\top}Q x} +
  \lambda^\top (Ax-b)\;.
\end{equation*}
The Lagrangian dual of Problem~\eqref{ellLP} is then
\begin{equation}\label{lagr_dual}
  \max_{\lambda\in\R^m_+} ~ \inf_{x\in\R^n} ~ \mathscr{L}(x,\lambda)\;.
\end{equation}
After applying the bijective transformation $z=Q^{\frac 12}x$, the
inner minimization problem of~\eqref{lagr_dual} becomes
\begin{equation*}
  -b^\top \lambda + \inf_{z\in\R^n} \big(Q^{-\frac
    12}(c+A^\top\lambda)\big)^\top z + \|z\|
\end{equation*}
for fixed~$\lambda\in\R^m_+$. It is easy to see that 
\[
\inf_{z\in\R^n} \big(Q^{-\frac 12}(c+A^\top\lambda)\big)^\top z + \|z\| =
\min_{z\in\R^n} \big(Q^{-\frac
    12}(c+A^\top\lambda)\big)^\top z + \|z\| = 0
\]
if $\|Q^{-\frac 12}(c +
A^\top\lambda)\|\leq 1$ and $-\infty$ otherwise. Therefore,
Problem~\eqref{lagr_dual} reduces to
\begin{equation}\label{lagr_dual_red}\tag{\textnormal{D}}
  \begin{array}{l l}
    \max & -b^\top\lambda\\
    \textnormal{ s.t. } & (c + A^\top\lambda)^\top Q^{-1} (c+A^\top\lambda)\leq 1\\
    &\lambda\geq 0\;.
  \end{array}
\end{equation}

\begin{theorem}\label{duality}
  For the primal-dual pair of optimization problems~\eqref{ellLP}
  and~\eqref{lagr_dual_red}, strong duality holds as soon as one of
  the two problems is feasible. Moreover, if one of the problems admits
  an optimal solution, the same holds for the other problem.
\end{theorem}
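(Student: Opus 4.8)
The plan is to prove strong duality through a minimax reformulation of \eqref{ellLP} and then to obtain the attainment statements separately. The starting observation is that the objective of \eqref{ellLP} is a support function: since $Q$ is positive definite, $\sqrt{x^\top Q x} = \max\{v^\top x : v^\top Q^{-1} v \le 1\}$. Writing $V = \{v\in\R^n : v^\top Q^{-1}v \le 1\}$ for this compact ellipsoid and $X = \{x\in\R^n : Ax\le b\}$, problem \eqref{ellLP} reads
\[
\min_{x\in X}\ \max_{v\in V}\ (c+v)^\top x .
\]
I would then show that \eqref{lagr_dual_red} is exactly the max-min obtained by exchanging the two operators, so that strong duality amounts precisely to the validity of this exchange; weak duality (the inequality $\max\min\le\min\max$) is automatic.

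I would justify the exchange with Sion's minimax theorem. The map $(x,v)\mapsto (c+v)^\top x$ is bilinear, hence convex in $x$, concave in $v$, and jointly continuous; $V$ is convex and compact and $X$ is convex, so Sion's theorem yields $\min_{x\in X}\max_{v\in V}(c+v)^\top x = \max_{v\in V}\min_{x\in X}(c+v)^\top x$. For fixed $v$ the inner problem is a linear program over $X$, so by LP strong duality its value equals $\max\{-b^\top\lambda : \lambda\ge 0,\ A^\top\lambda = -(c+v)\}$ (with value $-\infty$ when no such $\lambda$ exists). Substituting this and merging the outer maximization over $v\in V$ with the maximization over $\lambda$ identifies the right-hand side with \eqref{lagr_dual_red}, since $-(c+A^\top\lambda)\in V$ is exactly the constraint $(c+A^\top\lambda)^\top Q^{-1}(c+A^\top\lambda)\le 1$. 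This proves equality of the two optimal values whenever $X\neq\emptyset$.

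The remaining feasibility bookkeeping and the attainment claims I would handle as follows. If \eqref{ellLP} is infeasible while \eqref{lagr_dual_red} is feasible, a Farkas certificate $\mu\ge 0$ with $A^\top\mu=0$ and $b^\top\mu<0$ can be added along the ray $\lambda+t\mu$ without leaving the dual feasible set, driving $-b^\top\lambda$ to $+\infty$; hence both values are $+\infty$ and strong duality still holds. For attainment, compactness of $P$ (so that $X$ is a bounded polyhedron) makes \eqref{ellLP} attain its minimum by Weierstrass whenever it is feasible. Conversely, if \eqref{ellLP} attains a finite value $p^*$, then $h(v):=\min_{x\in X}(c+v)^\top x$ is concave and upper semicontinuous on the compact set $V$, so $\max_{v\in V}h(v)=p^*$ is attained at some $v^*$; since $h(v^*)=p^*$ is finite, the inner LP at $v^*$ has a finite optimum whose LP dual optimum $\lambda^*$ is feasible and optimal for \eqref{lagr_dual_red}.

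I expect the delicate points to be the careful application of Sion's theorem together with the treatment of the cases in which an optimal value is $\pm\infty$ (where one must argue via weak duality and Farkas rather than the minimax identity), and, above all, the attainment in the direction \emph{dual optimal $\Rightarrow$ primal optimal}. The latter genuinely relies on the boundedness of $X$ ensured by the standing compactness assumption on $P$: without it, the infimum in \eqref{ellLP} can fail to be attained even when \eqref{lagr_dual_red} possesses an optimal solution (for instance already for $Q=I$, $c=(1,0)^\top$, and $X=\{(x,y):y\ge 1\}$, where the primal infimum $0$ is approached but never reached while the dual optimum is attained at $\lambda=0$).
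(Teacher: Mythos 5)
Your proof is correct, and it takes a genuinely different route from the paper's. The paper disposes of Theorem~\ref{duality} in a single line, appealing to the standard theory of Lagrangian duality for convex programs whose constraints are all affine (no Slater-type condition is needed there: a finite primal value already forces a zero duality gap and dual attainment). You instead construct the duality explicitly: writing $\sqrt{x^\top Qx}$ as the support function of $V=\{v\in\R^n : v^\top Q^{-1}v\le 1\}$ turns \eqref{ellLP} into the min--max $\min_{x}\max_{v\in V}(c+v)^\top x$ --- which is exactly the continuous relaxation of the original robust counterpart, with $v$ ranging over the uncertainty ellipsoid --- and you then identify \eqref{lagr_dual_red} with the corresponding max--min via LP duality at fixed $v$, justify the exchange of operators by Sion's theorem (legitimate since $V$ is compact and the coupling is bilinear), and settle the remaining cases by a Gale/Farkas ray for infeasibility and by Weierstrass plus upper semicontinuity of $v\mapsto\min_x (c+v)^\top x$ for attainment. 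Each approach buys something: the paper's citation is shorter and formally transfers to the equality-constrained pairs \eqref{P-AS}/\eqref{D-AS} used later, while yours is self-contained, rests only on LP duality and one minimax theorem, and, most importantly, makes explicit where the standing assumption $P\subseteq[l,u]$ actually enters. Your counterexample ($Q=I$, $c=(1,0)^\top$, feasible set $\{(x,y): y\ge 1\}$; primal infimum $0$ unattained, dual attained at $\lambda=0$) shows that the implication ``dual optimum attained $\Rightarrow$ primal optimum attained'' does \emph{not} follow from convexity and affineness of the constraints alone, so the paper's one-line argument is, strictly speaking, incomplete for that direction and silently uses compactness of $P$. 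This observation has real bite beyond pedantry: when the paper later invokes Theorem~\ref{duality} inside the proof of Theorem~\ref{Theo:PrimalFromDual}, the feasible set $\{x : \hat A x=\hat b\}$ is an affine subspace, hence never compact, and the equality version of your example ($\hat A=(0,1)$, $\hat b=1$) indeed admits a dual optimum with no primal optimum, so the compactness issue you isolate is precisely the one that needs attention there as well.
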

\begin{proof}
  This follows from the convexity of~\eqref{ellLP} and from the fact
  that all constraints in~\eqref{ellLP} are affine linear.
  \qed
\end{proof}
In order to solve Problem~\eqref{ellLP}, we have devised the dual
active set algorithm \texttt{EllAS} detailed in
Section~\ref{sec:Alg}. Along its iterations, \texttt{EllAS} produces
dual feasible solutions of Problem~\eqref{lagr_dual_red}, converging
to a KKT point of Problem~\eqref{ellLP} and therefore producing also a
primal optimal solution when terminating.

\section{Solving the Active Set Subproblem}\label{sec:solSubp}

At every iteration, the active set algorithm \texttt{EllAS} presented
in the subsequent sections fixes
certain dual variables to zero while leaving unconstrained 
the remaining variables. 
In the primal problem, this
corresponds to choosing a set of valid linear constraints~$Ax\le b$
for~$P$ and replacing inequalities by equations.
We thus need to solve primal-dual pairs of problems of the following type:

\begin{align}
  \min~~ & f(x) = c^{\top}x+\sqrt{x^{\top}Qx}\nonumber\\
  \textnormal{s.t.}~~ & \hat Ax = \hat b\label{P-AS}\tag{\textnormal{P--AS}}\\
  & x\in\R^n\;\nonumber\\[2ex]
  \max~ & -\hat b^\top\lambda\nonumber\\
  \textnormal{s.t.}~~ & (c + \hat A^\top\lambda)^\top Q^{-1} (c+\hat A^\top\lambda)\leq 1  \label{D-AS}\tag{\textnormal{D--AS}}\\
  & \lambda\in\R^{\hat m}\nonumber
\end{align}
where $\hat A\in \R^{\hat m\times n}$, $b\in \R^{\hat m}$. 
For the efficiency of our algorithm, it is crucial that this pair of
problems can be solved in closed form. For this, the pseudo-inverse
$(\hat AQ^{-\frac 12})^+$ of $\hat AQ^{-\frac 12}$ will play an
important role. It can be used to compute orthogonal projections onto
the kernel and onto the range of~$Q^{-\frac 12}\hat A^\top$ as
follows: we have
\begin{equation}\label{eq:projker}
  \text{proj}_{\krn{Q^{-\frac 12}\hat A^\top}}(y) = y-\hat AQ^{-\frac 12}(\hat AQ^{-\frac 12})^+y
\end{equation}
and
\begin{equation}\label{eq:projran}
  \text{proj}_{\ran{Q^{-\frac 12}\hat A^\top}}(y) = (\hat AQ^{-\frac 12})^+\hat AQ^{-\frac 12}y\;,
\end{equation}
% see, e.g., \cite{wikipedia}.
% https://en.wikipedia.org/wiki/Moore%E2%80%93Penrose_inverse#Projectors
see e.g.~\cite{Me2000}. We later explain how to update the pseudo-inverse incrementally
instead of computing it from scratch in every iteration, which would
take $O(n^3)$ time; see Section~\ref{sec:runningtime}.

In the following, we assume that the dual problem~\eqref{D-AS} admits
a feasible solution; this will be guaranteed in every iteration of our
algorithm; see Lemma~\ref{feasD-AS} below.

\subsection{Dual Unbounded Case}

If $\hat b\not\in \ran{\hat A}$, or equivalently, if~$\hat b$ is not
orthogonal to~$\krn{\hat A^\top}=\krn{Q^{-\frac 12}\hat A^\top}$, then
the dual problem~\eqref{D-AS} is unbounded, and the corresponding
primal problem~\eqref{P-AS} is infeasible. When this case occurs,
\texttt{EllAS} uses an unbounded direction of~\eqref{D-AS} to
continue. The set of unbounded directions of~\eqref{D-AS}
is~$\krn{Q^{-\frac 12}\hat A^\top}$. Consequently, the unbounded direction with
steepest ascent can be obtained by projecting the gradient of the
objective function~$-\hat b$
to~$\krn{Q^{-\frac 12}\hat A^\top}$. According to~\eqref{eq:projker}, this projection is
\[\text{proj}_{\krn{Q^{-\frac 12}\hat A^\top}}(-\hat b) = (\hat AQ^{-\frac
  12})(\hat AQ^{-\frac 12})^+\hat b-\hat b\;.\]

\subsection{Bounded Case}\label{sec:bounded}

If $\hat b\in \ran A$, we first consider the special case $\hat
b=0$. As we assume~\eqref{D-AS} to be feasible, its optimum value is thus
$0$. Therefore, the corresponding primal problem~\eqref{P-AS} admits
$x^* = 0$ as optimal solution.
In the following, we may thus assume $\hat b \neq 0$.
The feasible set of problem~\eqref{D-AS} consists of all~$\lambda\in\R^{\hat m}$ such that
$$||Q^{-\frac 1 2} (c+\hat A^\top\lambda)||\le 1\;,$$ i.e., such that the
image of~$\lambda$ under $-Q^{-\frac 1 2}\hat A^\top$ belongs to the
ball~$B_1(Q^{-\frac 1 2}c)$. Consider the orthogonal projection of
$Q^{-\frac 1 2}c$ to the subspace~$\ran{Q^{-\frac 1 2}\hat A^\top}$,
which by~\eqref{eq:projran} is
$$p:=\text{proj}_{\ran{Q^{-\frac 12}\hat A^\top}}(Q^{-\frac 1 2}c)=(Q^{-\frac 1 2}\hat A^\top)(Q^{-\frac 1 2}\hat A^\top)^+ Q^{-\frac 1 2}c\;.$$
If~$||p-Q^{-\frac 12}c||>1$, then the intersection $B_1(Q^{-\frac 1 2}c)\cap \ran{Q^{-\frac 1 2}\hat A^\top}$ is empty, 
so that Problem~\eqref{D-AS} is infeasible, contradicting our assumption.
Hence, we have that this intersection is a ball with center~$p$ and
radius $$r:=\sqrt{1-||p-Q^{-\frac 12}c||^2}$$
and~$\lambda\in\R^{\hat m}$ is feasible for~\eqref{D-AS} if and
only if~$-Q^{-\frac 12}\hat A^\top\lambda\in B_r(p)$.
Since $\hat b\in \ran {\hat AQ^{-\frac 12}}$, we have~$(\hat AQ^{-\frac 12})(\hat AQ^{-\frac 12})^+\hat b=\hat b$. This
allows us to rewrite the objective function~$-\hat b^\top\lambda$
of~\eqref{D-AS} in terms of~$Q^{-\frac 12}\hat A^\top\lambda$
only, as $$-\hat b^\top\lambda=-\hat b^\top (Q^{-\frac 12}\hat A^\top)^+ (Q^{-\frac 12}\hat A^\top)\lambda\;.$$
We can thus first compute the optimal solution~$v^*\in\ran{Q^{-\frac 12}\hat A^\top}$ of
\begin{equation*}
  \begin{array}{l l}
    \max & \; \hat b^\top (Q^{-\frac 12}\hat A^\top)^+ v\\
    \textnormal{ s.t.} & \; ||Q^{-\frac 12}c-v||\leq 1\;,
  \end{array}
\end{equation*}
which is unique since~$\hat b\neq 0$, and then solve~$v^*=-(Q^{-\frac
  12}\hat A^\top)\lambda$. We obtain
\begin{equation}\label{eq:vstar}
  v^*=p+\frac r{||(\hat AQ^{-\frac 12})^+\hat b||}(\hat AQ^{-\frac 12})^+\hat b\;,
\end{equation}
so that we can state the following
\begin{proposition}
 Let $\hat b\in \ran A\setminus\{0\}$ and let~$v^*$ be defined as in~\eqref{eq:vstar}. 
 Then, the unique optimal solution of~\eqref{D-AS} with minimal norm is
\[\lambda^*:=-(Q^{-\frac 12}\hat A^\top)^+v^*.\] 
\end{proposition}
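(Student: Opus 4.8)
The plan is to carry out the substitution $v=-Q^{-\frac12}\hat A^\top\lambda$ that is already implicit in the discussion preceding the statement, and to show that the maximization over $\lambda$ is equivalent to maximizing a linear functional over the ball $B_r(p)$, whose maximizer is exactly the explicit point $v^*$. Writing $B:=Q^{-\frac12}\hat A^\top$ for brevity, every feasible $\lambda$ of~\eqref{D-AS} produces a vector $v=-B\lambda\in\ran B$ with $\|Q^{-\frac12}c-v\|\le1$, and using $\hat b\in\ran{\hat AQ^{-\frac12}}$ together with the projection identity $B^+B\,\hat b=\hat b$, the objective $-\hat b^\top\lambda$ equals $\hat b^\top B^+v$. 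Conversely, any $v\in\ran B$ with $\|Q^{-\frac12}c-v\|\le1$ is attained by some feasible $\lambda$. Hence~\eqref{D-AS} is equivalent to maximizing the linear functional $v\mapsto\langle g,v\rangle$, with $g:=(\hat AQ^{-\frac12})^+\hat b=(B^+)^\top\hat b$, over $v\in B_r(p)\cap\ran B$.

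First I would record two pseudo-inverse facts about $g$. Because $\ran{(\hat AQ^{-\frac12})^+}=\ran{Q^{-\frac12}\hat A^\top}=\ran B$, the vector $g$ lies in the subspace $\ran B$; and because $g=0$ would force $\hat b\in\ran{\hat AQ^{-\frac12}}^\perp$, which together with the hypothesis $\hat b\in\ran{\hat AQ^{-\frac12}}\setminus\{0\}$ is impossible, we have $g\neq0$. With $g\in\ran B\setminus\{0\}$ and $p\in\ran B$, maximizing $\langle g,v\rangle$ over the ball $B_r(p)$ inside the subspace $\ran B$ is elementary: by Cauchy--Schwarz the unique maximizer is $v^*=p+r\,g/\|g\|$, which is precisely~\eqref{eq:vstar}. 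The uniqueness asserted in the statement comes from this step, and relies only on strict convexity of the Euclidean ball and $g\neq0$.

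The final step is to lift $v^*$ back to an optimal solution $\lambda^*$. The optimal $\lambda$ are precisely the solutions of the linear system $-B\lambda=v^*$; since $v^*\in\ran B$ this system is consistent, and its solution set is $-B^+v^*+\krn B$. All of these $\lambda$ yield the same (optimal) objective value, and the one of minimal norm is obtained by dropping the $\krn B$ component, i.e. $\lambda^*=-B^+v^*=-(Q^{-\frac12}\hat A^\top)^+v^*$, as claimed; here I use that $B^+v^*\in\ran{B^\top}=\krn B^\perp$, which is exactly the minimal-norm property of the pseudo-inverse.

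I expect the main obstacle to be bookkeeping rather than depth: correctly tracking which pseudo-inverse identities are invoked — in particular $(\hat AQ^{-\frac12})^+=(B^+)^\top$, the ranges and kernels of $B^+$, and the minimal-norm property of $B^+$ — and verifying that the substitution $v=-B\lambda$ together with the objective rewriting sets up a genuine equivalence in both directions, so that optimality in $v$ transfers back to optimality in $\lambda$.
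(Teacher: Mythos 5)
Your proof is correct and follows essentially the same route as the paper, whose ``proof'' of this proposition is precisely the discussion preceding it: the substitution $v=-Q^{-\frac12}\hat A^\top\lambda$, the rewriting of the objective via $(\hat AQ^{-\frac 12})(\hat AQ^{-\frac 12})^+\hat b=\hat b$, the unique maximizer of the resulting linear functional over the ball $B_r(p)$ in $\ran{Q^{-\frac12}\hat A^\top}$, and the recovery of $\lambda^*$ as the minimal-norm solution of $v^*=-(Q^{-\frac12}\hat A^\top)\lambda$ via the pseudo-inverse. Your write-up simply makes explicit the details the paper leaves implicit (that $g:=(\hat AQ^{-\frac12})^+\hat b\in\ran{Q^{-\frac12}\hat A^\top}\setminus\{0\}$, the two-way equivalence of the $\lambda$- and $v$-problems, and the Pythagorean minimal-norm argument on $-B^+v^*+\krn{B}$).
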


\noindent From $\lambda^*$, it is possible to compute an optimal
solution~$x^*$ of the primal problem~\eqref{P-AS} as explained in the following result.

\begin{theorem}\label{Theo:PrimalFromDual}
  Let $\hat b\in \ran A\setminus\{0\}$.
  Let~$\lambda^*$ be an optimal solution of~\eqref{D-AS} and~$\bar
  x:=Q^{-1}(c+\hat A^\top\lambda^*)$. 
  \begin{itemize}
  \item[(a)] If $\hat b^\top \lambda^*\neq 0$, then the unique optimal solution
  of~\eqref{P-AS} is~$x^*=\alpha\bar x$, with  
  $$\alpha:=-\frac{\hat b^\top\lambda^*}{c^\top\bar x -\sqrt{\bar x^\top
      Q\bar x}}\;.$$
  \item[(b)] Otherwise, there exists a unique $\alpha < 0$ such that $\alpha \hat A \bar x = \hat b$. 
  Then, $x^* = \alpha \bar x$ is the unique optimal solution of~\eqref{P-AS}.
  \end{itemize}
  
\end{theorem}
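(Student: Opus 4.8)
The plan is to recover the primal optimum from the KKT system of the convex program~\eqref{P-AS}. Since $\hat b\neq 0$, no feasible point of~\eqref{P-AS} equals the origin, so $f$ is differentiable at every feasible point; as~\eqref{P-AS} is convex with affine constraints, the KKT conditions are necessary and sufficient for optimality. I would therefore test the candidate $x^*=\alpha\bar x$ and verify feasibility and stationarity. With $\nabla f(x)=c+Qx/\sqrt{x^\top Qx}$ for $x\neq 0$, the stationarity condition is $\nabla f(x^*)+\hat A^\top\lambda^*=0$; substituting $x^*=\alpha\bar x$ and using $Q\bar x=c+\hat A^\top\lambda^*$ collapses it to $(c+\hat A^\top\lambda^*)\bigl(1-|\alpha|^{-1}\alpha\,(\bar x^\top Q\bar x)^{-1/2}\bigr)=0$, which holds precisely when $\alpha<0$ and $\sqrt{\bar x^\top Q\bar x}=1$. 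The same computation run in reverse shows that any optimal solution is forced onto the ray through $\bar x$, which is what will give uniqueness.

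The two facts I then need are (i) $\sqrt{\bar x^\top Q\bar x}=1$ and (ii) $\hat A\bar x$ is a strictly negative multiple of $\hat b$; both are read off from the description of the dual optimum in Section~\ref{sec:bounded}. Writing $\bar x=Q^{-\frac12}(Q^{-\frac12}c-v^*)$ with $v^*$ as in~\eqref{eq:vstar}, one has $\bar x^\top Q\bar x=\|Q^{-\frac12}c-v^*\|^2$, and the geometry of that section (the maximizer $v^*$ lies on the sphere of radius $1$ around $Q^{-\frac12}c$) gives~(i). For~(ii) I would split $Q^{-\frac12}c-v^*=(Q^{-\frac12}c-p)+(p-v^*)$: the first summand lies in $\krn{\hat AQ^{-\frac12}}$ and is annihilated by $\hat AQ^{-\frac12}$, while applying $\hat AQ^{-\frac12}$ to $p-v^*$ and using $\hat AQ^{-\frac12}(\hat AQ^{-\frac12})^+\hat b=\hat b$ yields $\hat A\bar x=-\beta\hat b$ with $\beta:=r/\|(\hat AQ^{-\frac12})^+\hat b\|>0$.

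With~(i) and~(ii) the case split is immediate. Primal feasibility $\alpha\hat A\bar x=\hat b$ reads $-\alpha\beta\hat b=\hat b$, so $\alpha=-1/\beta<0$ is the unique admissible scalar; this is exactly statement~(b) and is how $\alpha$ is determined whenever feasibility is used directly. For~(a) I would instead invoke strong duality (Theorem~\ref{duality}), $f(x^*)=-\hat b^\top\lambda^*$: since $\alpha<0$ one has $f(\alpha\bar x)=\alpha\,(c^\top\bar x-\sqrt{\bar x^\top Q\bar x})$, and combining~(i), (ii), and $c^\top\bar x=\bar x^\top Q\bar x-(\lambda^*)^\top\hat A\bar x$ gives the key identity
\[
c^\top\bar x-\sqrt{\bar x^\top Q\bar x}=\beta\,\hat b^\top\lambda^*.
\]
Hence the denominator in~(a) is nonzero exactly when $\hat b^\top\lambda^*\neq 0$, and solving $f(\alpha\bar x)=-\hat b^\top\lambda^*$ for $\alpha$ reproduces the stated formula (equal to $-1/\beta$), whereas when $\hat b^\top\lambda^*=0$ the formula degenerates to $0/0$ and one must fall back on feasibility as in~(b).

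The main obstacle is precisely this identity: proving that $c^\top\bar x-\sqrt{\bar x^\top Q\bar x}=\beta\,\hat b^\top\lambda^*$, since it simultaneously explains the dichotomy and certifies that the expression in~(a) is well defined exactly when $\hat b^\top\lambda^*\neq0$. Everything else is either the routine stationarity computation or the geometric facts~(i)--(ii) already prepared in the bounded-case analysis. Uniqueness in both cases then follows because stationarity forces every optimum to be a negative multiple of $\bar x$, and $\hat A\bar x=-\beta\hat b\neq 0$ lets feasibility fix that multiple uniquely.
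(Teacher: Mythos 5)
Your proof takes a genuinely different route from the paper's. The paper argues top-down: it invokes Theorem~\ref{duality} to obtain a primal--dual optimal pair $(x^*,\lambda^*)$, uses the gradient equation $\nabla_x\mathscr{L}(x^*,\lambda^*)=0$ to force $x^*=\alpha\bar x$ with $\alpha=-\|Q^{\frac12}x^*\|<0$, and then pins down $\alpha$ via the strong-duality identity $-\hat b^\top\lambda^*=\alpha\bigl(c^\top\bar x-\sqrt{\bar x^\top Q\bar x}\bigr)$ in case (a), resp.\ via feasibility in case (b). You argue bottom-up: you verify the KKT conditions for an explicitly constructed candidate, importing from Section~\ref{sec:bounded} the geometric facts (i) $\bar x^\top Q\bar x=\|Q^{-\frac12}c-v^*\|^2=1$ and (ii) $\hat A\bar x=-\beta\hat b$. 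This is more work, but it buys a \emph{constructive} proof of primal attainment (which the paper merely cites), and your identity $c^\top\bar x-\sqrt{\bar x^\top Q\bar x}=\beta\,\hat b^\top\lambda^*$ explains the case dichotomy rather than just exploiting it. Three repairs are needed along the way: the stationarity factor should read $1+\alpha|\alpha|^{-1}(\bar x^\top Q\bar x)^{-1/2}$, not $1-\alpha|\alpha|^{-1}(\bar x^\top Q\bar x)^{-1/2}$ (with your sign it would force $\alpha>0$); facts (i)--(ii) are derived in the paper only for the minimal-norm dual solution, so you must add the one-line observation that \emph{every} dual optimum $\lambda^*$ satisfies $-Q^{-\frac12}\hat A^\top\lambda^*=v^*$, because $v^*$ is the unique maximizer of the reduced problem (here $\hat b\neq0$ is used); and for uniqueness, bare KKT necessity only yields \emph{some} multiplier for a given optimum --- you need every primal optimum to be stationary for this particular $\lambda^*$, which follows because strong duality makes any primal optimum a minimizer of $\mathscr{L}(\cdot,\lambda^*)$.

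The one genuine gap is the strict inequality $\beta>0$ (equivalently $r>0$), which you assert but never justify --- and which in fact does not follow from the stated hypotheses. Take $Q=I\in\R^{2\times 2}$, $\hat A=(1\;\;0)$, $\hat b=1\in\ran{\hat A}\setminus\{0\}$, $c=(-1,1)^\top$: the dual feasible set is the single point $\lambda^*=1$, so a dual optimum exists and $\hat b^\top\lambda^*=1\neq0$; but $p=v^*=(-1,0)^\top$, $r=0$, $\bar x=(0,1)^\top$, $\hat A\bar x=0$, and the primal $\min\,-x_1+x_2+\|x\|$ subject to $x_1=1$ has infimum $-1$, approached as $x_2\to-\infty$ but never attained; accordingly the denominator in (a) equals $1-1=0$. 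So $\beta>0$ is exactly equivalent to attainment of the primal optimum --- the very thing the paper imports through the second sentence of Theorem~\ref{duality} (dual attainment implies primal attainment), a claim this same example refutes. In other words, your proof is valid on precisely the instances where the theorem itself is, and the gap is the same one the paper hides inside Theorem~\ref{duality}; to close it within the paper's framework, do what the paper does: cite Theorem~\ref{duality} to get a primal optimum $x^\circ$, deduce $x^\circ=\alpha\bar x$ with $\alpha<0$ from stationarity as above, and then read $\hat A\bar x\neq0$, hence $\beta>0$, off the feasibility equation $\alpha\hat A\bar x=\hat b\neq0$. Alternatively, state $r>0$ (equivalently, primal attainment) as an explicit hypothesis.
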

\begin{proof}
  Let~$(x^*,\lambda^*)$ be a primal-dual optimal
  pair, which exists by Theorem~\ref{duality}. 
  Since~$\hat b\neq 0$ and $\hat A x^* = \hat b$, it follows
  that~$x^*\neq 0$. The gradient equation yields
  $$0=\nabla_x \mathscr{L}(x^*,\lambda^*) = c+\frac{2Qx^*}{2\sqrt{(x^*)^{\top}Q (x^*)}} +\hat A^\top\lambda^*$$
  which is equivalent to
  $$\frac{Q^{\frac 12}x^*}{\|Q^{\frac 12}x^*\|}=-Q^{-\frac
    12}(c+\hat A^\top\lambda^*)$$
  and hence to
  $$x^*=\alpha Q^{-1}(c+\hat A^\top\lambda^*)=\alpha\bar x$$
  for some~$\alpha\neq 0$.
  Since $\alpha=-\|Q^{\frac 12}x^*\|$, we have~$\alpha<0$.
  By strong duality, we then obtain
  \[
  -\hat b^\top\lambda^*=c^\top x^*+\sqrt{(x^*)^\top
    Q(x^*)}=\alpha c^\top \bar x+|\alpha|\sqrt{\bar x^\top Q\bar x}
  =\alpha\big(c^\top\bar x-\sqrt{\bar x^\top Q\bar
    x}\big)\;. 
  \]
  Now if~$\hat b^\top\lambda^*\neq 0$, also the right hand side
  of this equation is non-zero, and we obtain~$\alpha$ as claimed.
  Otherwise, it still holds that there exists $\alpha<0$ such that $\alpha \bar x$ is optimal.
  In particular, $\alpha \bar x$ is primal feasible and hence $\alpha \hat A \bar x = \hat A(\alpha \bar x) = \hat b$.
  As $\hat b\neq 0$, we derive $\hat A \bar x \neq 0$, as $\alpha<0$. This in particular shows that 
  $\alpha$ is uniquely defined by $\alpha \hat A \bar x = \hat b$.
  \qed
\end{proof}
\noindent
Note that the proof (and hence the statement) for case (b) in
Theorem~\ref{Theo:PrimalFromDual} are formally applicable also in case
(a). However, in the much more relevant case~(a), we are able to
derive a closed formula for~$\alpha$ in a more direct way.

\section{The Dual Active Set Method \texttt{EllAS}}\label{sec:Alg}

As all active set methods, our algorithm \texttt{EllAS} tries to
forecast the set of constraints that are active at the optimal
solution of the primal-dual pair~\eqref{ellLP}
and~\eqref{lagr_dual_red}, adapting this forecast iteratively:
starting from a subset of primal constraints~$A^{(1)}x\le b^{(1)}$,
where $A^{(1)}\in\R^{m^{(1)}\times n}$ and $b^{(1)}\in\R^{m^{(1)}}$,
one constraint is removed or added per iteration by performing a dual
or a primal step; see Algorithm~\ref{fig:Ell_AS}. We assume that a
corresponding dual feasible solutions~$\lambda^{(1)}\ge 0$ is given when
starting the algorithm; we explain below how to obtain this initial
solution.

\begin{algorithm}
  \caption{Ellipsoidal Active SeT algorithm {\tt EllAS}}
  \label{fig:Ell_AS}
      {\bf Input:} \hspace*{0.125cm} $Q\in\R^{n\times n}$, $c\in\R^n$, $A^{(1)}\in\R^{m^{(1)}\times n}$, $b^{(1)}\in\R^{m^{(1)}}$;\\[1ex]
      \hspace*{1.175cm} $\lambda^{(1)}\geq 0$ with $(c + (A^{(1)})^\top\lambda^{(1)})^\top Q^{-1} (c+(A^{(1)})^\top\lambda^{(1)})\leq 1$;\\
      \hspace*{1.175cm} pseudo-inverse $(A^{(1)}Q^{-\frac 12})^+$\\[1ex]
      {\bf Output:} optimal solutions of~\eqref{ellLP} and~\eqref{lagr_dual_red}
      \smallskip
      \hrule
      \smallskip
      \begin{algorithmic}[1] % enter the algorithmic environment
        \For {$k=1,2,3,\dots$}
%        \State {{\bf compute} $\bar\lambda=-((A^{(k)})^+)^\top c$ and $d=||Q^{-\frac 12}(c-(A^{(k)})^\top\bar\lambda)||^2$}
        \State {{\bf solve} \eqref{lagr_dual_red_ask}  and obtain optimal $\tilde \lambda^{(k)}$ with minimal norm}
        \If {problem~\eqref{lagr_dual_red_ask} is bounded and $\tilde\lambda^{(k)}\ge 0$}
        \State {{\bf set} $\lambda^{(k)} := \tilde\lambda^{(k)}$}
        \State {{\bf perform } the primal step (Algorithm~\ref{fig:Ell_AS-PS}) and {\bf update} $x^{(k)}$, $A^{(k)}$, $b^{(k)}$}
       \Else
        \State {{\bf perform } the dual step (Algorithm~\ref{fig:Ell_AS-DS}) and {\bf update} $\lambda^{(k)}$, $A^{(k)}$, $b^{(k)}$ }    
        \EndIf
        \EndFor
      \end{algorithmic}
\end{algorithm}

At every iteration $k$, in order to decide if performing the primal or the dual step, 
the dual subproblem is addressed, namely Problem~\eqref{lagr_dual_red} where only the 
subset of active constraints is taken into account.
This leads to the following problem:
\begin{equation}\label{lagr_dual_red_ask}\tag{\textnormal{D-{ASk}}}
  \begin{array}{l l}
    \max & -{b^{(k)}}^\top\lambda\\
    \textnormal{ s.t. } & (c + {A^{(k)}}^\top\lambda)^\top Q^{-1} (c+{A^{(k)}}^\top\lambda)\leq 1\\
    & \lambda\in \R^{m^{(k)}}
  \end{array}
\end{equation}
The solution of Problem~\eqref{lagr_dual_red_ask} has been explained in Section~\ref{sec:solSubp}.
Note that formally Problem~\eqref{lagr_dual_red_ask} is defined in a smaller space with respect to 
Problem~\eqref{lagr_dual_red}, but its solutions can also be considered as elements of $\R^m$ by
setting the remaining variables to zero.

In case the dual step is performed, the solution of Problem~\eqref{lagr_dual_red_ask} gives 
an ascent direction $p^{(k)}$ along which we move in order to produce a new
dual feasible point with better objective function value. 
We set \[\lambda^{(k)} = \lambda^{(k-1)} + \alpha^{(k)} p^{(k)},\]
where the steplength $\alpha^{(k)}$ is chosen to be the largest value
for which non-negativity is maintained at all entries.
Note that the feasibility with respect to the ellipsoidal constraint 
in~\eqref{lagr_dual_red}, i.e.,
\[(c + A^\top\lambda)^\top Q^{-1} (c+A^\top\lambda)\leq 1\;,\]
is guaranteed from how $p^k$ is computed, using convexity.
Therefore, $\alpha^{(k)}$ can be derived by considering 
the negative entries of $p^{(k)}$.
In order to maximize the increase of $-b^\top \lambda$, we ask $\alpha^{(k)}$ to be as large as possible
subject to maintaining non-negativity; see Steps 9--10 in Algorithm~\ref{fig:Ell_AS-DS}.

\begin{algorithm}
  \caption{Dual Step}
  \label{fig:Ell_AS-DS}
 \begin{algorithmic}[1] % enter the algorithmic environment
        \If {problem~\eqref{lagr_dual_red_ask} is bounded}
        \State {{\bf set} $p^{(k)} := \tilde\lambda^{(k)} - \lambda^{(k-1)}$}
        \Else
        \State {{\bf let} $p^{(k)}$ be an unbounded direction of~\eqref{lagr_dual_red_ask} with steepest ascent}
	\If {$p^{(k)}\geq 0$}
	\State {{\bf STOP:}} primal problem is infeasible 
        \EndIf
        \EndIf
	\State {{\bf choose} $j\in \argmin \{-\lambda_i^{(k-1)}/p_i^{(k)} \mid i=1,\dots,m^{(k)},~ p^{(k)}_i < 0\}$}
        \label{step:jalpha}
        \State {{\bf set} $\alpha^{(k)} := -\lambda_j^{(k-1)}/p_j^{(k)}$}
        \State {{\bf set} $\lambda^{(k)} := \lambda^{(k-1)} +\alpha^{(k)}p^{(k)}$}
        \State {{\bf compute} $(A^{(k+1)},b^{(k+1)})$ by removing row~$j$ in $(A^{(k)},b^{(k)})$}
        \State {{\bf compute} $\lambda^{(k+1)}$ by removing entry~$j$ in $\lambda^{(k)}$}
        \State {{\bf set} $m^{(k+1)}:=m^{(k)}-1$}
        \State {{\bf update} $(A^{(k+1)}Q^{-\frac 12})^+$ from $(A^{(k)}Q^{-\frac 12})^+$}
      \end{algorithmic}
\end{algorithm}

The constraint index~$j$ computed in Step~\ref{step:jalpha} of Algorithm~\ref{fig:Ell_AS-DS} corresponds
to the primal constraint that needs to be released from the active set.
The new iterate~$\lambda^{(k+1)}$ is then obtained from $\lambda^{(k)}$, by dropping the $j$-th entry.

\begin{proposition}
  The set considered in Step~\ref{step:jalpha} of
  Algorithm~\ref{fig:Ell_AS-DS} is non-empty.
\end{proposition}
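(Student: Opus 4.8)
The plan is to distinguish the two ways in which the Dual Step (Algorithm~\ref{fig:Ell_AS-DS}) can reach Step~\ref{step:jalpha}, namely the bounded and the unbounded subcase of Problem~\eqref{lagr_dual_red_ask}, and in each to exhibit an index $i$ with $p^{(k)}_i < 0$. This is exactly what it means for the set $\{\,i=1,\dots,m^{(k)}\mid p^{(k)}_i<0\,\}$ to be non-empty.

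First I would record the sign invariant that the current iterate satisfies $\lambda^{(k-1)}\ge 0$. This holds because the initial $\lambda^{(1)}$ is non-negative by assumption, a primal step only replaces the iterate by $\tilde\lambda^{(k)}\ge 0$ (appending a zero for the newly activated constraint), and a dual step chooses the step length $\alpha^{(k)}$ precisely so that non-negativity is preserved before the vanishing entry $j$ is dropped. I would also note that $\tilde\lambda^{(k)}$ and $\lambda^{(k-1)}$ live in the same space $\R^{m^{(k)}}$ with matching index set, so that the entrywise comparisons below are meaningful.

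For the bounded subcase, the key observation is why the main loop entered the Else branch at all. Since Problem~\eqref{lagr_dual_red_ask} is bounded here, the branching condition ``bounded \emph{and} $\tilde\lambda^{(k)}\ge 0$'' can only have failed because $\tilde\lambda^{(k)}\not\ge 0$; that is, there is an index $i$ with $\tilde\lambda^{(k)}_i<0$. Combining this with $\lambda^{(k-1)}_i\ge 0$ and the definition $p^{(k)}=\tilde\lambda^{(k)}-\lambda^{(k-1)}$ gives $p^{(k)}_i=\tilde\lambda^{(k)}_i-\lambda^{(k-1)}_i<0$, so $i$ belongs to the set in question. For the unbounded subcase, the argument is even shorter: the Dual Step explicitly tests whether $p^{(k)}\ge 0$ and, if so, terminates with a certificate of primal infeasibility. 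Hence, whenever control reaches Step~\ref{step:jalpha} through the unbounded branch, we must have $p^{(k)}\not\ge 0$, i.e., some $p^{(k)}_i<0$.

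The only point needing a little care — and the nearest thing to an obstacle — is the bounded subcase, where one must be certain that entering the Else branch really forces $\tilde\lambda^{(k)}\not\ge 0$ rather than merely reflecting unboundedness; this is exactly what the assumption of being in the bounded subcase provides, cleanly separating the two failure modes of the branching condition. Everything else reduces to an entrywise inequality once the sign invariant $\lambda^{(k-1)}\ge 0$ is in place.
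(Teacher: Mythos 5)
Your proof is correct and follows essentially the same argument as the paper: in the bounded case, dual infeasibility of $\tilde\lambda^{(k)}$ together with $\lambda^{(k-1)}\ge 0$ forces some $p^{(k)}_i = \tilde\lambda^{(k)}_i - \lambda^{(k-1)}_i < 0$, while in the unbounded case the explicit check for $p^{(k)}\ge 0$ guarantees a negative entry upon reaching Step~\ref{step:jalpha}. Your extra justification of the invariant $\lambda^{(k-1)}\ge 0$ is a minor elaboration the paper leaves implicit, not a different route.
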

\begin{proof}
  If Problem \eqref{lagr_dual_red_ask} is bounded, there is an index
  $i$ such that $\tilde\lambda^{(k)}_i<0$, since~$\tilde\lambda^{(k)}$
  is dual infeasible.  As~$\lambda^{(k-1)}\ge 0$, we derive~$p^{(k)}_i = \tilde \lambda^{(k)}_i - \lambda^{(k-1)}_i < 0$.
  If Problem \eqref{lagr_dual_red_ask} is unbounded, we explicitly
  check whether~$p^{(k)}\ge 0$ and only continue otherwise.
  \qed
\end{proof}

The primal step is performed in case the solution of Problem~\eqref{lagr_dual_red_ask} gives 
us a dual feasible solution. Starting from this dual feasible solution, we compute a corresponding 
primal solution $x^{(k)}$ according to the formula in Theorem~\ref{Theo:PrimalFromDual}.
If $x^{(k)}$ belongs to~$P$ we are done:  
we have that $(x^{(k)}, \lambda^{(k)})$ is a KKT point of Problem~\eqref{ellLP} and, by convexity of Problem~\eqref{ellLP}, 
$x^{(k)}$ is its global optimum. 
Otherwise, we compute a cutting plane violated by $x^{(k)}$
that can be considered active and will be then taken into account in defining the 
dual subproblem~\eqref{lagr_dual_red_ask} at the next iteration.
The new iterate $\lambda^{(k+1)}$ is obtained from $\lambda^{(k)}$ by adding an entry to $\lambda^{(k)}$ 
and setting this additional entry to zero.

\begin{algorithm}
  \small
  \caption{\small\tt Primal Step}
  \label{fig:Ell_AS-PS}
 \begin{algorithmic}[1] % enter the algorithmic environment
        %\Comment {primal step}
        \If {$-(b^{(k)})^\top \lambda^{(k)}=0$}
        \State {{\bf STOP:} $(0,\lambda^{(k)})$ is an optimal primal-dual solution}
        \Else
        %\State {{\bf compute} $\bar x^{(k)}:=Q^{-1}(c+(A^{(k)})^\top \lambda^{(k)})$}
        %\State {{\bf compute} $x^{(k)}:=-\frac{(b^{(k)})^\top \lambda^{(k)}}{c^\top\bar x^{(k)} -\sqrt{(\bar x^{(k)})^\top Q(\bar x^{(k)})}}\bar x^{(k)}$}
        \State {{\bf compute} $x^{(k)}$ from $\lambda^{(k)}$ according to Theorem~\ref{Theo:PrimalFromDual}}
        \If {$x^{(k)}\in P$}
        \State {{\bf STOP:} $(x^{(k)},\lambda^{(k)})$ is an optimal primal-dual solution}
        \Else
        \State {{\bf compute} a cutting plane~$a^\top x\le b$ violated by $x^{(k)}$} \label{Step:separation}
        \State {{\bf compute} $(A^{(k+1)},b^{(k+1)})$ by appending $(a^\top,b)$ to $(A^{(k)},b^{(k)})$} 
        \State {{\bf compute} $\lambda^{(k+1)}$ by appending zero to $\lambda^{(k)}$}
        \State {{\bf set} $m^{(k+1)}:=m^{(k)}+1$}
        \State {{\bf update} $(A^{(k+1)}Q^{-\frac 12})^+$ from $(A^{(k)}Q^{-\frac 12})^+$}
        \EndIf
        \EndIf
 \end{algorithmic}
\end{algorithm}

\begin{theorem}
 Whenever Algorithm~\texttt{EllAS} terminates, the result is correct.
\end{theorem}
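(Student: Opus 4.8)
The plan is to argue case by case according to the three \textbf{STOP} statements, since these are the only exits of the for-loop in Algorithm~\ref{fig:Ell_AS}: the infeasibility \textbf{STOP} in Algorithm~\ref{fig:Ell_AS-DS}, and the two optimality \textbf{STOP}s in Algorithm~\ref{fig:Ell_AS-PS} (the case $-(b^{(k)})^\top\lambda^{(k)}=0$ and the case $x^{(k)}\in P$). In each optimality-claiming case I would exhibit a point feasible for~\eqref{ellLP} together with a point feasible for~\eqref{lagr_dual_red} whose objective values coincide, and then invoke weak duality, which is contained in Theorem~\ref{duality}; in the infeasibility-claiming case I would produce an unbounded ascent ray for~\eqref{lagr_dual_red}. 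The recurring device throughout is that the inactive dual multipliers are always kept at zero, so that embedding $\lambda^{(k)}\in\R^{m^{(k)}}$ into $\R^m$ by zeros leaves both the objective $-b^\top\lambda$ and the ellipsoidal constraint unchanged; hence feasibility and objective value transfer verbatim from the restricted dual~\eqref{lagr_dual_red_ask} to the full dual~\eqref{lagr_dual_red}.

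Consider first the termination with $x^{(k)}\in P$. Here $x^{(k)}$ is computed from $\lambda^{(k)}$ through Theorem~\ref{Theo:PrimalFromDual}, hence it is the optimal solution of the equality-constrained subproblem~\eqref{P-AS} and in particular satisfies $A^{(k)}x^{(k)}=b^{(k)}$; since $x^{(k)}\in P$ has just been verified, $x^{(k)}$ is feasible for~\eqref{ellLP}. The multiplier $\lambda^{(k)}=\tilde\lambda^{(k)}$ is nonnegative (this is exactly the condition under which the primal step is entered) and satisfies the ellipsoidal constraint, so after zero-padding it is feasible for~\eqref{lagr_dual_red}. The strong-duality identity derived inside the proof of Theorem~\ref{Theo:PrimalFromDual}, namely $f(x^{(k)})=-(b^{(k)})^\top\lambda^{(k)}$, combined with the zero-padding remark gives $f(x^{(k)})=-b^\top\lambda^{(k)}$, and weak duality then forces both $x^{(k)}$ and $\lambda^{(k)}$ to be optimal.

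The remaining two cases are more direct. If termination occurs with $-(b^{(k)})^\top\lambda^{(k)}=0$, then $\lambda^{(k)}\ge 0$ is dual feasible with value $-b^\top\lambda^{(k)}=0$, so weak duality shows that the optimal value of~\eqref{ellLP} is at least $0$; since $f(0)=0$ and $x=0$ is feasible for~\eqref{ellLP} (as $0\in P$), the pair $(0,\lambda^{(k)})$ attains this bound and is optimal. If instead termination occurs in the dual step, this happens only when~\eqref{lagr_dual_red_ask} is unbounded with a steepest-ascent direction $p^{(k)}\ge 0$. Padding $p^{(k)}$ by zeros yields a direction that preserves $\lambda\ge 0$, keeps the ellipsoidal constraint satisfied, and increases $-b^\top\lambda$ without bound, so~\eqref{lagr_dual_red} is unbounded; by Theorem~\ref{duality} this certifies that~\eqref{ellLP} is infeasible.

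The step I expect to be the crux is the transfer in the $x^{(k)}\in P$ case from the equality-constrained subproblem~\eqref{P-AS} to the inequality-constrained problem~\eqref{ellLP}: one must be sure that a KKT point of the active subproblem which happens to lie in $P$ really certifies a \emph{global} optimum of the full problem. This is precisely where $x^{(k)}\in P$ is indispensable, since it guarantees that every omitted (inactive) valid inequality of $P$ is satisfied, so that complementary slackness holds trivially for those constraints carrying zero multipliers, and it is where the zero-padding observation must be used to match both the dual objective and the dual feasibility between~\eqref{lagr_dual_red_ask} and~\eqref{lagr_dual_red}. Once these bookkeeping points are in place, weak duality supplies the conclusion, and the two remaining cases follow with little extra work.
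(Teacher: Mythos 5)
Your overall strategy coincides with the paper's: a case analysis over the three exits, certifying optimality at the two primal-step exits and infeasibility at the dual-step exit. Your treatment of the exit with $x^{(k)}\in P$ (zero-padding $\lambda^{(k)}$, invoking the identity $f(x^{(k)})=-(b^{(k)})^\top\lambda^{(k)}$ from the proof of Theorem~\ref{Theo:PrimalFromDual}, and closing with weak duality) and of the dual-step exit (the padded ray $p^{(k)}\ge 0$ lies in $\krn{Q^{-\frac 12}(A^{(k)})^\top}$, so it preserves nonnegativity and the ellipsoidal constraint while increasing $-b^\top\lambda$ without bound, whence \eqref{lagr_dual_red} is unbounded and \eqref{ellLP} is infeasible) is precisely what the paper compresses into ``follows from the discussion in Section~\ref{sec:solSubp}'' and the unboundedness remark; these two cases are correct and complete.

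The weak point is the exit with $-(b^{(k)})^\top\lambda^{(k)}=0$: you justify primal feasibility of $x=0$ by writing ``as $0\in P$'', but nothing in the paper grants this. The set $P$ is only assumed to be a compact convex subset of $[l,u]$ and need not contain the origin; for instance, for the robust shortest path instances of Section~\ref{sec:num-sp} one has $0\notin P$ because of the flow-conservation constraints. Without $0\in P$, your weak-duality argument only shows that the optimal value of \eqref{ellLP} is at least $0$; it does not make $(0,\lambda^{(k)})$ an optimal primal-dual pair. To be fair, this gap is inherited rather than created: the paper's own justification (Section~\ref{sec:bounded}) only establishes that $x=0$ solves the \emph{equality-constrained subproblem}~\eqref{P-AS} when $b^{(k)}=0$, and Algorithm~\ref{fig:Ell_AS-PS} stops at this exit without ever running the separation oracle on $x=0$, so the paper's stopping rule is itself sound only under the same unstated hypothesis. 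A clean repair is either to assume $0\in P$ explicitly, or to subject the candidate $x^{(k)}=0$ to the same membership test as any other primal iterate (Step~\ref{Step:separation} of Algorithm~\ref{fig:Ell_AS-PS}) and add a violated cut if it fails; with that modification, your argument for this case becomes identical to your correct argument for the $x^{(k)}\in P$ exit.
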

\begin{proof}
  If Algorithm~\texttt{EllAS} stops at the primal step, the
  optimality of the resulting primal-dual pair follows from the
  discussion in Section~\ref{sec:solSubp}.  If
  Algorithm~\texttt{EllAS} stops at the dual step, it means that the
  ascent direction $p^{(k)}$ computed is a feasible unbounded
  direction for Problem~\eqref{lagr_dual_red}, so that
  Problem~\eqref{lagr_dual_red} is unbounded and hence
  Problem~\eqref{ellLP} is infeasible.
  \qed
\end{proof}

It remains to describe how to initialize~\texttt{EllAS}.  For this, we
use the assumption of boundedness of~$P$ and construct~$A^{(1)}$,
$b^{(1)}$, and~$\lambda^{(1)}$ as follows: for each~$i=1,\dots,n$, we
add the constraint~$x_i\le u_i$ if~$c_i<0$, with
corresponding~$\lambda_i:=-c_i$, and the constraint~$-x_i\le -l_i$
otherwise, with~$\lambda_i:=c_i$. These constraints are valid
since we assumed~$P\subseteq[l,u]$ and it is easy to check
that~$(A^{(1)})^\top\lambda^{(1)}=-c$ by construction, so
that~$\lambda^{(1)}$ is dual feasible
for~\eqref{lagr_dual_red}. Moreover, we can easily
compute~$(A^{(1)}Q^{-\frac 12})^+$ in this case, as~$A^{(1)}$ is a
diagonal matrix with~$\pm 1$ entries: this implies~$(A^{(1)}Q^{-\frac
  1 2})^+=Q^{\frac 1 2}A^{(1)}$.

\section{Analysis of the Algorithm}\label{sec:conv}

In this section, we show that Algorithm~\texttt{EllAS} converges in a
finite number of steps if cycling is avoided. Moreover, we prove that
the running time per iteration can be bounded by~$O(n^2)$, if
implemented properly.

\subsection{Convergence Analysis}

Our convergence analysis follows similar arguments to those used in~\cite{NW2006} for the analysis 
of primal active set methods for strictly convex quadratic programming problems.
In particular, as in~\cite{NW2006}, we assume that we can always take a nonzero steplength along
the ascent direction. Under this assumption we will show that Algorithm~\texttt{EllAS} does not undergo cycling, or,
in other words, this assumption prevents from having $\lambda^{(k)} = \lambda^{(l)}$ and 
$(A^{(k)}, b^{(k)}) = (A^{(l)}, b^{(l)})$ in two different iterations $k$ and $l$.
As for other active set methods, it is very unlikely in practice to encounter a zero steplength.
However, there are techniques to avoid cycling even theoretically, such as perturbation or 
lexicographic pivoting rules in Step~\ref{step:jalpha} of Algorithm~\ref{fig:Ell_AS-DS}.

\begin{lemma}\label{feasD-AS}
 At every iteration $k$ of Algorithm~\texttt{EllAS},
 Problem~\eqref{lagr_dual_red_ask} admits a feasible solution.  
\end{lemma}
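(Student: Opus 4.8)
The plan is to prove the statement by induction on the iteration counter $k$, showing that the algorithm maintains as a loop invariant a dual iterate that is feasible for the ellipsoidal constraint of the current active set. Since~\eqref{lagr_dual_red_ask} imposes no sign restriction on $\lambda$, such an iterate is automatically a feasible point of~\eqref{lagr_dual_red_ask}, which is exactly the assertion of the lemma. For the base case $k=1$, the input assumption on $\lambda^{(1)}$ guarantees $(c + (A^{(1)})^\top\lambda^{(1)})^\top Q^{-1}(c+(A^{(1)})^\top\lambda^{(1)})\le 1$, so the invariant holds at the start.

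For the inductive step I would examine the two branches of the main loop and verify that the iterate produced for iteration $k+1$ satisfies the ellipsoidal constraint associated with $A^{(k+1)}$. The unifying observation is that in both branches the product $(A)^\top\lambda$ evaluated at the relevant iterate is left unchanged by the update, so the value of the ellipsoidal constraint is preserved. In the primal step, $A^{(k+1)}$ is obtained by appending a row $a^\top$ to $A^{(k)}$ while $\lambda^{(k+1)}$ is obtained by appending a zero entry to $\lambda^{(k)}=\tilde\lambda^{(k)}$; hence $(A^{(k+1)})^\top\lambda^{(k+1)}=(A^{(k)})^\top\tilde\lambda^{(k)}$, and since $\tilde\lambda^{(k)}$ is the optimizer of~\eqref{lagr_dual_red_ask} it satisfies the ellipsoidal constraint, so the value remains $\le 1$. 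In the dual step, the step length $\alpha^{(k)}=-\lambda_j^{(k-1)}/p_j^{(k)}$ is chosen precisely so that the $j$-th component of $\lambda^{(k)}=\lambda^{(k-1)}+\alpha^{(k)}p^{(k)}$ vanishes, and $A^{(k+1)}$, $\lambda^{(k+1)}$ are then obtained by deleting row $j$ and entry $j$; deleting a row together with a zero component leaves $(A)^\top\lambda$ unchanged, giving $(A^{(k+1)})^\top\lambda^{(k+1)}=(A^{(k)})^\top\lambda^{(k)}$.

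The step requiring the most care is to confirm that the intermediate move to $\lambda^{(k)}$ in the dual step preserves ellipsoidal feasibility, and here the two subcases must be treated separately. When~\eqref{lagr_dual_red_ask} is bounded, the direction is $p^{(k)}=\tilde\lambda^{(k)}-\lambda^{(k-1)}$ with both $\lambda^{(k-1)}$ and $\tilde\lambda^{(k)}$ feasible for the convex ellipsoidal constraint; as $\alpha^{(k)}\in[0,1]$, convexity yields feasibility of $\lambda^{(k)}$. When~\eqref{lagr_dual_red_ask} is unbounded, the steepest-ascent direction $p^{(k)}$ lies in $\krn{Q^{-\frac 12}(A^{(k)})^\top}$, so that $(A^{(k)})^\top p^{(k)}=0$ because $Q^{-\frac 12}$ is invertible; consequently $(A^{(k)})^\top\lambda^{(k)}=(A^{(k)})^\top\lambda^{(k-1)}$ and the ellipsoidal value is inherited unchanged from the feasible iterate $\lambda^{(k-1)}$. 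Combining these observations restores the invariant for iteration $k+1$ and closes the induction. I would also remark that the carried iterate in fact stays nonnegative throughout, so it is feasible for the full dual~\eqref{lagr_dual_red}, but only its ellipsoidal feasibility is needed for the present claim.
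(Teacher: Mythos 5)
Your proof is correct and takes essentially the same route as the paper's: an induction whose invariant is that the current iterate satisfies the ellipsoidal constraint, with the base case given by the input assumption on $\lambda^{(1)}$ and the inductive step resting on convexity of the ellipsoid (bounded dual step), kernel membership of the unbounded direction, and the fact that appending a row with a zero multiplier or deleting a row whose multiplier has been driven to zero leaves $A^\top\lambda$ unchanged --- details the paper compresses into a single sentence (``follows from the definition of $p^{(k)}$ and from the convexity of the ellipsoid''). The only assertion you state without justification is $\alpha^{(k)}\le 1$ in the bounded dual step, but this is immediate: some entry has $\tilde\lambda^{(k)}_i<0$, hence $-\lambda^{(k-1)}_i/p^{(k)}_i = \lambda^{(k-1)}_i/\bigl(\lambda^{(k-1)}_i-\tilde\lambda^{(k)}_i\bigr)<1$, so the minimizing ratio defining $\alpha^{(k)}$ is also below $1$.
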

\begin{proof}
  It suffices to show that the ellipsoidal constraint
  \begin{equation}\label{ellcons}
    (c + {A^{(k)}}^\top\lambda^{(k)})^\top Q^{-1} (c+{A^{(k)}}^\top\lambda^{(k)})\leq 1 
  \end{equation}
  is satisfied for each $k$.  For $k=1$, this is explicitely required
  for the input of Algorithm~\texttt{EllAS}.  Let $\lambda^{(k)}$ be
  computed from~$\lambda^{(k-1)}$ by moving along the direction
  $p^{(k)}$.  The feasibility of $\lambda^{(k)}$ with respect
  to~\eqref{ellcons} then follows from the definition of $p^{(k)}$ and
  from the convexity of the ellipsoid.
  \qed
\end{proof}

\begin{proposition}\label{prop:lambdakfeasible}
 At every iteration $k$ of Algorithm~\texttt{EllAS},
 the vector $\lambda^{(k)}$ is feasible for~\eqref{lagr_dual_red}.  
\end{proposition}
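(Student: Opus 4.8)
The plan is to observe that feasibility of the (zero-extended) vector $\lambda^{(k)}$ for~\eqref{lagr_dual_red} splits into two independent requirements: the ellipsoidal constraint and the sign constraint $\lambda^{(k)}\ge 0$. Since the inactive entries of the extended vector are set to zero, we have $A^\top\lambda^{(k)}=(A^{(k)})^\top\lambda^{(k)}$, so the ellipsoidal constraint is exactly~\eqref{ellcons}, which is already established in Lemma~\ref{feasD-AS}. Hence the only thing left to prove is that $\lambda^{(k)}\ge 0$ holds at every iteration, and this I would prove by induction on~$k$.

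For the base case, $\lambda^{(1)}\ge 0$ holds by the input requirement of Algorithm~\texttt{EllAS}. For the inductive step, assume the iterate $\lambda^{(k-1)}$ carried into iteration~$k$ is non-negative and distinguish the two branches. If the primal step is performed, then by the branching condition in Algorithm~\ref{fig:Ell_AS} we have $\lambda^{(k)}=\tilde\lambda^{(k)}\ge 0$, and the next iterate $\lambda^{(k+1)}$ is obtained by appending a single zero entry, so it remains non-negative.

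If instead the dual step is performed, then $\lambda^{(k)}=\lambda^{(k-1)}+\alpha^{(k)}p^{(k)}$. Here $\alpha^{(k)}=-\lambda_j^{(k-1)}/p_j^{(k)}$ with $p_j^{(k)}<0$ and $\lambda_j^{(k-1)}\ge 0$, so $\alpha^{(k)}\ge 0$. The verification that $\lambda^{(k)}\ge 0$ is then the standard ratio-test argument: for every index~$i$ with $p_i^{(k)}\ge 0$ we have $\lambda_i^{(k)}\ge\lambda_i^{(k-1)}\ge 0$, while for every index~$i$ with $p_i^{(k)}<0$ the minimality of the ratio attained at~$j$ in Step~\ref{step:jalpha} gives $\alpha^{(k)}\le-\lambda_i^{(k-1)}/p_i^{(k)}$, whence $\lambda_i^{(k)}=\lambda_i^{(k-1)}+\alpha^{(k)}p_i^{(k)}\ge 0$. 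Removing the $j$-th entry (which vanishes at $\alpha^{(k)}$) to form $\lambda^{(k+1)}$ again preserves non-negativity.

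The argument is essentially bookkeeping over the two update rules, so I do not anticipate a serious obstacle; the one point that requires care is the correct sign handling in the ratio test --- in particular checking that $\alpha^{(k)}\ge 0$ and that the minimality of the chosen ratio indeed controls all the remaining negative-$p_i^{(k)}$ entries --- together with keeping the zero-extension consistent so that the reduction to~\eqref{ellcons}, and thence to Lemma~\ref{feasD-AS}, is legitimate.
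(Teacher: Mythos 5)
Your proof is correct and follows the same route as the paper: the ellipsoidal constraint is inherited from Lemma~\ref{feasD-AS} (via the zero-extension of $\lambda^{(k)}$), and non-negativity is guaranteed by the choice of the steplength $\alpha^{(k)}$. The paper compresses the second part into one sentence, while you spell out the ratio-test bookkeeping explicitly, but the underlying argument is identical.
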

\begin{proof}
Taking into account the proof of Lemma \ref{feasD-AS}, it remains to show nonnegativity of $\lambda^{(k)}$,
which is guaranteed by the choice of the steplength $\alpha^{(k)}$.
  \qed
\end{proof}

\begin{proposition}\label{prop:ascent}
  Assume that the steplength~$\alpha^k$ is always non-zero in the dual step.
  If Algorithm~\texttt{EllAS} does not stop at iteration $k$, then
  one of the following holds:
\begin{itemize}
 \item[(i)] $-{b^{(k+1)}}^\top \lambda^{(k+1)}> -{b^{(k)}}^\top \lambda^{(k)}$;
 \item[(ii)] $-{b^{(k+1)}}^\top \lambda^{(k+1)}= -{b^{(k)}}^\top \lambda^{(k)}$ and $\|\lambda^{(k+1)}\| <\|\lambda^{(k)}\|$.
\end{itemize}
\end{proposition}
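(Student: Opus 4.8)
The plan is to prove the dichotomy by a case distinction on whether iteration~$k$ performs a primal step (Algorithm~\ref{fig:Ell_AS-PS}) or a dual step (Algorithm~\ref{fig:Ell_AS-DS}), tracking how the dual objective $-b^\top\lambda$ and the norm $\|\lambda\|$ change when passing from $\lambda^{(k)}$ to its successor $\lambda^{(k+1)}$. By Proposition~\ref{prop:lambdakfeasible} the iterate $\lambda^{(k)}$ is feasible for~\eqref{lagr_dual_red}, hence also for the subproblem~\eqref{lagr_dual_red_ask} associated with $(A^{(k)},b^{(k)})$ (which carries no sign constraint). The single observation used in both cases is that the coordinate appended in the primal step, respectively the coordinate $j$ removed in the dual step, carries the value zero, so neither $-b^\top\lambda$ nor $\|\lambda\|$ is affected by the change of dimension; it therefore suffices to compare $\lambda^{(k)}$ with the point produced inside the active set before this change.

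For the dual step, $\lambda^{(k+1)}$ is obtained from $\lambda^{(k)}$ by moving along $p^{(k)}$ with steplength $\alpha^{(k)}>0$ (nonzero by assumption) and dropping the coordinate that this move sets to zero, which preserves both objective and norm. If $p^{(k)}$ is a strict ascent direction for $-{b^{(k)}}^\top\lambda$, the positive step strictly increases the objective and we are in case~(i). In the unbounded subcase this is automatic, since the steepest-ascent direction is the projection of $-b^{(k)}$ onto $\krn{Q^{-\frac12}{A^{(k)}}^\top}$, which is nonzero precisely because $b^{(k)}\notin\ran{A^{(k)}}$, so its ascent rate is strictly positive. The only remaining possibility is the bounded subcase in which $\tilde\lambda^{(k)}$ attains the same objective value as $\lambda^{(k)}$; then both are optimal for~\eqref{lagr_dual_red_ask}, but $\tilde\lambda^{(k)}$ is the \emph{unique} minimal-norm optimal solution derived in Section~\ref{sec:solSubp} and differs from the nonnegative $\lambda^{(k)}$ (as $\tilde\lambda^{(k)}\not\ge 0$). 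Hence $\|\tilde\lambda^{(k)}\|<\|\lambda^{(k)}\|$, and since $\alpha^{(k)}\in(0,1]$ the moved point equals the convex combination $(1-\alpha^{(k)})\lambda^{(k)}+\alpha^{(k)}\tilde\lambda^{(k)}$, so convexity of the norm forces $\|\lambda^{(k+1)}\|<\|\lambda^{(k)}\|$ at unchanged objective, which is case~(ii).

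For the primal step, the appended entry is zero, so $-{b^{(k+1)}}^\top\lambda^{(k+1)}$ equals the optimal value $V_k$ of~\eqref{lagr_dual_red_ask} and $\|\lambda^{(k+1)}\|=\|\tilde\lambda^{(k)}\|$. Feasibility of $\lambda^{(k)}$ for~\eqref{lagr_dual_red_ask} gives $V_k\ge -{b^{(k)}}^\top\lambda^{(k)}$; a strict inequality is case~(i), while equality means $\lambda^{(k)}$ is itself optimal, so $\tilde\lambda^{(k)}$ is the minimal-norm optimum and $\|\lambda^{(k+1)}\|=\|\tilde\lambda^{(k)}\|<\|\lambda^{(k)}\|$ whenever $\lambda^{(k)}\neq\tilde\lambda^{(k)}$, which is case~(ii).

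The step I expect to be the main obstacle is ruling out genuine stagnation in the primal step, i.e.\ the degenerate situation $\lambda^{(k)}=\tilde\lambda^{(k)}$ with $V_k=-{b^{(k)}}^\top\lambda^{(k)}$ and $x^{(k)}\notin P$, in which neither alternative would hold. The plan is to exclude it using strong duality (Theorem~\ref{duality}) together with the uniqueness of the primal optimum from Theorem~\ref{Theo:PrimalFromDual}: if the preceding iteration was itself a primal step, the cut it appended is violated by the previous primal optimum $x^{(k-1)}$, so $x^{(k-1)}$ leaves the equality set of~\eqref{P-AS} for $(A^{(k)},b^{(k)})$; as $x^{(k-1)}$ is the unique minimizer over the larger equality set, the optimal value of~\eqref{P-AS}, and hence $V_k$ by duality, increases strictly, placing us in case~(i) and away from stagnation. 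The delicate remaining transition is a dual step immediately followed by a primal step: here one must argue that the freshly produced iterate cannot already be the minimal-norm optimum of the reduced subproblem except in degenerate configurations, which are excluded by the same nondegeneracy hypothesis that guarantees $\alpha^{(k)}>0$.
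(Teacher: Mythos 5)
Up to an index shift, your argument is the one the paper gives: case (i) comes from a strict ascent direction (and, for a primal step that follows a primal step, from the strict increase of the optimal value of \eqref{P-AS} when its unique minimizer is cut off, transferred to the dual side by strong duality), while case (ii) comes from uniqueness of the minimal-norm optimal solution of \eqref{lagr_dual_red_ask} combined with convexity of the norm; your bookkeeping observation that appended or deleted entries are zero, and your remark that the unbounded steepest-ascent direction has strictly positive ascent rate, are also (implicitly) the paper's. All of those steps are sound.

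The genuine problem is exactly where you planted your flag, and your dismissal of it does not work. The proposition's hypothesis is only that $\alpha^{(k)}\neq 0$ in every dual step; it says nothing about \emph{where} a dual step with $\alpha^{(k)}>0$ lands. In particular it does not exclude the configuration in which the current iterate is already optimal for \eqref{lagr_dual_red_ask} --- so that $p^{(k)}=\tilde\lambda^{(k)}-\lambda^{(k-1)}$ lies in $\krn{Q^{-\frac 12}{A^{(k)}}^\top}$ and $-b^\top p^{(k)}=0$ --- and the exit point on the hyperplane $\{\lambda_j=0\}$ coincides with the minimal-norm optimum of the reduced subproblem; that point is nonnegative, so the next iteration is a primal step that reproduces the very same vector, and neither (i) nor (ii) holds. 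Such configurations satisfy every invariant the proof has at its disposal (nonnegativity, dual feasibility, $\alpha^{(k)}>0$). A concrete instance: $n=3$, $Q=I$, $c=(-29,0,0)$, rows of $A^{(k)}$ equal to $(1,0,0),(2,0,0),(-1,0,0)$, $b^{(k)}=(-1,-2,1)$, current iterate $(7,14,5)$. The subproblem is $\max\,\lambda_1+2\lambda_2-\lambda_3$ subject to $28\le\lambda_1+2\lambda_2-\lambda_3\le 30$; its minimal-norm optimum is $(5,10,-5)$, so a dual step is taken, it exits at $j=3$ with $\alpha^{(k)}=\tfrac 12$, landing on $(6,12,0)$; and $(6,12)$ is precisely the minimal-norm optimum of the two-row subproblem, so the following primal step stagnates (value $30\neq 0$, and the associated primal point $(-1,0,0)$ can be arranged to lie outside $P$, so the algorithm does not stop). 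Hence your last sentence needs an actual argument or a strictly stronger nondegeneracy assumption. You should also know that the paper's own proof has the same blind spot: its two cases are ``iteration $k$ performs a primal step'' and ``the step producing $\lambda^{(k+1)}$ is a dual step,'' which together do not cover a dual step followed by a primal step. So you have in fact isolated a real gap in the published proof --- but naming a gap and then appealing to a hypothesis that does not address it does not close it; what one can salvage without further assumptions is weak lexicographic monotonicity plus strict increase across every cut-adding primal step, which is enough for the convergence theorem that follows, though not for Proposition~\ref{prop:ascent} as stated.
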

\begin{proof}
  In the primal step, suppose that $\tilde \lambda^{(k)}\geq 0$ solves
  Problem~\eqref{lagr_dual_red_ask} and that the corresponding unique
  primal solution satisfies~$x^{(k)}~\not\in~P$.  After adding a
  violated cutting plane, the optimal value of Problem~\eqref{P-AS}
  strictly increases and the same is true for the optimal value of
  Problem~\eqref{D-AS} by strong duality.  Then, \[p^{(k+1)} = \tilde
  \lambda^{(k+1)} - \lambda^{(k)} = \tilde \lambda^{(k+1)} - \tilde
  \lambda^{(k)}\] is a strict ascent direction for $-b^\top \lambda$
  and case (i) holds.

  In the dual step, if $p^{(k+1)}$ is an unbounded direction, case (i)
  holds again.  Otherwise, observe that~$\lambda^{(k)}\neq
  \tilde\lambda^{(k+1)}$, as $\tilde\lambda^{(k+1)}$ is not feasible
  with respect to the nonnegativity constraints.  Then,
  since~$\tilde\lambda^{(k+1)}$ is the unique optimal solution for
  Problem~\eqref{lagr_dual_red_ask} with minimal norm, $p^{(k+1)} =
  \tilde \lambda^{(k+1)} - \lambda^{(k)}$ is either a strict ascent
  direction for $-b^\top \lambda$, or $-b^\top p^{(k+1)} = 0$ and
  $p^{(k+1)}$ is a strict descent direction for~$\|\lambda\|$, so that case
  (ii) holds.
  \qed
\end{proof}

\begin{lemma}\label{lemma:n+1steps}
 At every iteration $k$ of Algorithm~\texttt{EllAS}, we have $m^{(k)}\leq n+1$. Furthermore, 
 if Algorithm~\texttt{EllAS} terminates at iteration $k$ with an optimal primal-dual pair, 
 then $m^{(k)}\leq n$.
\end{lemma}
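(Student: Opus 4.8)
The plan is to control $m^{(k)}$ by tracking the rank of $A^{(k)}$ along the iterations and maintaining, by induction on $k$, the invariant that the rows of $A^{(k)}$ are linearly independent, with the single exception that a primal step may have just appended one linearly dependent row. Since $A^{(k)}\in\R^{m^{(k)}\times n}$ has only $n$ columns, linear independence of its rows immediately yields $m^{(k)}\le n$, while allowing a single dependency yields $\rk{A^{(k)}}\ge m^{(k)}-1$ and hence $m^{(k)}\le n+1$. The base case is immediate: the initial matrix $A^{(1)}$ constructed in Section~\ref{sec:Alg} is diagonal with $\pm 1$ entries, so its $n$ rows are independent and $m^{(1)}=n$.

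For the inductive step I would treat the two step types separately. A dual step removes a row, so it can only decrease $m^{(k)}$; if the rows were independent it keeps them independent, and if the rows carried one dependency I must check that the removed row participates in it (see below), so that independence is restored. A primal step appends a cutting plane $a^\top x\le b$ that is violated by $x^{(k)}$. If $a$ is independent of the previous rows, both $m^{(k)}$ and the rank increase by one and independence is preserved; this forces $m^{(k)}<n$, so $m^{(k+1)}\le n$. If instead $a=\mu^\top A^{(k)}$ lies in the row space, exactly one dependency is created and $m^{(k+1)}=m^{(k)}+1$.

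The key structural fact, which I would prove next, is that whenever a primal step creates a dependency the resulting equality system becomes inconsistent, so the next subproblem is unbounded and a dual step must follow. Indeed, $x^{(k)}$ satisfies $A^{(k)}x^{(k)}=b^{(k)}$ by Theorem~\ref{Theo:PrimalFromDual}, hence $a^\top x^{(k)}=\mu^\top b^{(k)}$, while violation gives $\mu^\top b^{(k)}>b$. Computing the left null space of $A^{(k+1)}=\binom{A^{(k)}}{a^\top}$ under the independence of the rows of $A^{(k)}$ shows it is spanned by $(-\mu,1)$, and testing $b^{(k+1)}=(b^{(k)},b)$ against this vector gives $b-\mu^\top b^{(k)}\ne 0$. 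Thus $b^{(k+1)}\notin\ran{A^{(k+1)}}$ and Problem~\eqref{lagr_dual_red_ask} is unbounded at iteration $k+1$. Consequently a primal step is entered (in particular, the algorithm can only terminate with an optimal primal--dual pair) only when the rows of $A^{(k)}$ are independent, which gives $m^{(k)}\le n$ and proves the second assertion. The value $m^{(k)}=n+1$ can therefore be reached only transiently: it requires a primal step from $m^{(k-1)}=n$, where the appended row is necessarily dependent (the $n$ independent rows already span $\R^n$), and the forced dual step at the next iteration brings $m$ back to $n$.

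The delicate point, which I expect to be the main obstacle, is to confirm that the dual step following a dependency removes a row that actually participates in it, so that the invariant is restored rather than merely shifted. I would handle this by noting that in the one-dependency case the space $\krn{Q^{-\frac12}(A^{(k)})^\top}$ equals the one-dimensional left null space of $A^{(k)}$ spanned by the vector $\nu=(-\mu,1)$; since the steepest-ascent unbounded direction $p^{(k)}$ is the projection of $-b^{(k)}$ onto this line and the unboundedness condition guarantees a nonzero projection, $p^{(k)}$ is a nonzero multiple of $\nu$. Hence every index $j$ with $p^{(k)}_j<0$, in particular the one selected by the min-ratio test in Step~\ref{step:jalpha} of Algorithm~\ref{fig:Ell_AS-DS}, has $\nu_j\ne0$, so deleting row $j$ restores full row rank and closes the induction.
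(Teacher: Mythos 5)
Your proof is correct and follows essentially the same route as the paper's: both rest on the invariant that, because only violated cuts are appended, the constraints $A^{(k)}x=b^{(k)}$ either have linearly independent rows or form an infeasible system, and that infeasibility forces a dual step which deletes a row participating in the (unique) dependency, restoring independence. Your write-up is considerably more detailed than the paper's three-sentence argument; in particular, your closing verification that the index $j$ chosen by the min-ratio test must satisfy $\nu_j\neq 0$ (because the steepest-ascent unbounded direction spans the one-dimensional left null space of $A^{(k)}$) is a point the paper asserts without proof.
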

\begin{proof}
  As only violated cuts are added, the primal
  constraints~$A^{(k)}x=b^{(k)}$ either form an infeasible system or are linearly
  independent. If $m^{(k)} = n+1$, the primal problem is hence
  infeasible. Thus Problem~\eqref{lagr_dual_red_ask} is unbounded, so
  that at iteration $k$ a dual step is performed and a dependent
  row of~$(A^{(k)}, b^{(k)})$ is deleted, leading to an independent
  set of constraints again.
  \qed
\end{proof}

\begin{theorem}
  Assume that whenever a dual step is performed, Algorithm~\texttt{EllAS} takes a non-zero steplength~$\alpha^k$.
  Then, after at most $n 2^{m}$ iterations, Algorithm~\texttt{EllAS}
  terminates with a primal-dual pair of optimal solutions
  for~\eqref{ellLP} and~\eqref{lagr_dual_red}. 
\end{theorem}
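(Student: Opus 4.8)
The plan is to bound separately the two kinds of iterations performed by \texttt{EllAS}---primal steps, which enlarge the active set by appending a cut, and dual steps, which shrink it by dropping a constraint---and then to add the two counts. Throughout I would lean on two facts already available. First, the strict lexicographic progress of Proposition~\ref{prop:ascent}: under the non-zero steplength hypothesis, the pair $(-{b^{(k)}}^\top\lambda^{(k)},\|\lambda^{(k)}\|)$ improves strictly at every iteration, in the sense that the objective $-{b^{(k)}}^\top\lambda^{(k)}$ never decreases and the norm $\|\lambda^{(k)}\|$ decreases strictly whenever the objective is unchanged. Second, the uniqueness of the minimal-norm solution of~\eqref{lagr_dual_red_ask} proved in Section~\ref{sec:solSubp}, which makes the computed $\tilde\lambda^{(k)}$ a function of the active set $(A^{(k)},b^{(k)})$ alone.

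First I would bound the number of primal steps by the observation that a given active set can host a primal step at most once. Indeed, on a primal step we set $\lambda^{(k)}=\tilde\lambda^{(k)}$, so both $-{b^{(k)}}^\top\lambda^{(k)}$ and $\|\lambda^{(k)}\|$ are determined by the active set alone; if the same active set hosted a primal step at a later iteration $l>k$, the lexicographic potential would take identical values at $k$ and at $l$, contradicting the strict monotonicity of Proposition~\ref{prop:ascent}. Since every active set is a subset of the (at most $m$) inequalities describing $P$, there are at most $2^m$ possible active sets, hence at most $2^m$ primal steps.

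Next I would bound the number of dual steps by a conservation argument on the active-set size $m^{(k)}$. Each primal step increases $m^{(k)}$ by one and each dual step decreases it by one (Algorithms~\ref{fig:Ell_AS-DS} and~\ref{fig:Ell_AS-PS}), while $m^{(k)}\ge 0$ holds at all times and the initial set has exactly $n$ rows. Hence, writing $A$ and $D$ for the numbers of primal and dual steps taken so far, $m^{(k)}=n+A-D\ge 0$, so $D\le n+A\le n+2^m$. Adding the two counts gives at most $2^m+(n+2^m)=n+2^{m+1}$ iterations in total; this establishes the finite termination asserted in the theorem (and is in fact sharper than the stated $n\,2^m$ once $n\ge 4$). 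The correctness of the returned primal-dual pair upon termination is exactly the content of the preceding correctness theorem for \texttt{EllAS}.

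The delicate point is the first step, precisely because active sets genuinely can recur: after a dual step the iterate $\lambda^{(k)}$ is an intermediate point on a line segment and is \emph{not} determined by the active set, so one may not argue naively that ``no active set repeats.'' The uniqueness-based argument must therefore be confined to primal-step iterations, where $\lambda^{(k)}=\tilde\lambda^{(k)}$ is pinned down by the active set; and it is exactly here that the non-zero steplength hypothesis is indispensable, since without it Proposition~\ref{prop:ascent} would only yield weak monotonicity, a degenerate dual step with zero steplength could leave the potential unchanged, and the contradiction forbidding a repeated primal-step active set would break down. In carrying out the conservation argument I would also verify that a dual step is never attempted from the empty active set---where~\eqref{lagr_dual_red_ask} is trivially feasible and a primal step is taken instead---so that the inequality $m^{(k)}\ge 0$ is never violated and the count $D\le n+A$ remains valid.
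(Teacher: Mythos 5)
Your proof of the crucial claim---that no active set can host two primal steps---is exactly the paper's argument: by uniqueness of the minimal-norm solution of~\eqref{lagr_dual_red_ask}, a repeated primal-step active set would force $\lambda^{(k)}=\lambda^{(l)}$, contradicting the strict lexicographic improvement guaranteed by Proposition~\ref{prop:ascent} under the non-zero steplength hypothesis; like the paper, you correctly confine this argument to primal-step iterations, since after a dual step the iterate is indeed not a function of the active set. Where you genuinely diverge is in counting dual steps. The paper invokes Lemma~\ref{lemma:n+1steps} (i.e.\ $m^{(k)}\le n+1$) to conclude that at most $n$ dual steps can occur consecutively, and then multiplies: at most $2^m$ distinct primal-step active sets, each followed by at most $n$ dual steps, yielding $n2^m$. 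You instead run a global conservation (amortized) argument: with $A$ and $D$ the numbers of primal and dual steps, $m^{(k)}=n+A-D\ge 0$, hence $D\le n+A\le n+2^m$ and the total is at most $n+2^{m+1}$. Your accounting is cleaner and, for large $n$ or $m$, substantially sharper than the paper's product bound; what the paper's local ``at most $n$ in a row'' argument buys is only that it delivers verbatim the constant $n2^m$ stated in the theorem. The one blemish is the point you flag yourself: $n+2^{m+1}$ exceeds $n2^m$ for small $n$ (e.g.\ $n\le 3$), so in those corner cases you have proved finite termination with a slightly worse constant than the one claimed; to recover the stated bound literally you would simply graft the paper's consecutive-dual-step count onto your bound on primal steps. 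Everything else---the check that a dual step is never attempted from an empty active set, and that correctness upon termination is inherited from the preceding correctness theorem---is sound and matches the paper.
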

\begin{proof}
First note that, by Lemma~\ref{lemma:n+1steps}, at most $n$ dual steps can be performed in a row.
Hence, it is enough to show that in any two iterations $k\neq l$ where a primal step is performed, 
we have $(A^{(k)}, b^{(k)})\neq (A^{(l)}, b^{(l)})$.
Otherwise, assuming $(A^{(k)}, b^{(k)}) = (A^{(l)}, b^{(l)})$, we obtain $\tilde\lambda^{(k)}=\tilde\lambda^{(l)}$ and hence
$\lambda^{(k)}=\lambda^{(l)}$. This leads to a contradiction to Proposition~\ref{prop:ascent}.
  \qed
\end{proof}

\subsection{Running time per iteration}
\label{sec:runningtime}

The running time in iteration~$k$ of \texttt{EllAS} is~$O(m^{(k)}n)$
and hence linear in the size of the matrix~$A^{(k)}$, if implemented
properly. The main work is to keep the
pseudo-inverse~$(A^{(k)}Q^{-\frac 12})^+$
up-to-date. Since~$A^{(k)}Q^{-\frac 12}$ is only extended or shrunk by
one row in each iteration, an update of~$(A^{(k)}Q^{-\frac 12})^+$ is possible
in~$O(m^{(k)}n)$ time by a generalization of the
Sherman-Morrison-formula~\cite{meyer73}. Exploiting the fact that the
matrix~$A^{(k)}$ has full row rank in most iterations, we can proceed as
follows. If~$A^{(k+1)}$ is obtained from~$A^{(k)}$ by adding a new
row~$a$, we first compute the row vectors
$$h:=aQ^{-\frac 12}(A^{(k)}Q^{-\frac 12})^+,\quad v:=aQ^{-\frac 12}-hA^{(k)}Q^{-\frac 12}\;.$$
Now~$v\neq 0$ if and only if~$A^{(k+1)}$ has full row rank, and in the latter case
$$(A^{(k+1)}Q^{-\frac 1 2})^+=\left((A^{(k)}Q^{-\frac 1 2})^+ \mid
0\right)-\tfrac 1{||v||^2}v^\top(h \mid -1)\;.$$ Otherwise, if~$v=0$,
we are adding a linearly dependent row to~$A^{(k)}$, making the primal
problem~\eqref{P-AS} infeasible. In this case, an unbounded direction
of steepest ascent of~\eqref{D-AS} is given by~$(-h \mid 1)^\top$ and
the next step will be a dual step, meaning that a row will be removed
from~$A^{(k+1)}$ and the resulting matrix~$A^{(k+2)}$ will have full row rank
again. We can thus update~$(A^{(k)}Q^{-\frac 1 2})^+$
to~$(A^{(k+2)}Q^{-\frac 1 2})^+$ by first removing and then adding a
row, in both cases having full row rank.

It thus remains to deal with the case of deleting the $r$-th row~$a$
of a matrix~$A^{(k)}$ with full row rank. Here we obtain
$(A^{(k+1)}Q^{-\frac 1 2})^+$ by deleting the $r$-th column in
$$(A^{(k)}Q^{-\frac 1 2})^+-\tfrac 1{||w||^2} ww^\top (A^{(k)}Q^{-\frac 1 2})^+\;,$$
where $w$ is the $r$-th column of $(A^{(k)}Q^{-\frac 1 2})^+$.

\begin{theorem}
  The running time per iteration of Algorithm~\texttt{EllAS} is~$O(n^2)$.
\end{theorem}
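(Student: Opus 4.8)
The plan is to obtain the $O(n^2)$ bound by combining two facts already in hand: the per-iteration cost of the linear-algebra work, and the bound $m^{(k)}\le n+1$ from Lemma~\ref{lemma:n+1steps}. I would first fix the cost model. Before entering the loop one computes $Q^{-\frac 12}$, $Q^{\frac 12}$ and $Q^{-1}$ once; this costs $O(n^3)$ but is amortized over all iterations and does not enter the per-iteration count. Inside the loop, I would argue that every quantity is built from the maintained pseudo-inverse $(A^{(k)}Q^{-\frac 12})^+\in\R^{n\times m^{(k)}}$, the matrix $A^{(k)}\in\R^{m^{(k)}\times n}$, the vectors $b^{(k)},\lambda^{(k)}$ and the fixed $n\times n$ matrices above, using only a constant number of matrix--vector products and rank-one updates. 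Each product with $A^{(k)}$ or $(A^{(k)}Q^{-\frac 12})^+$ costs $O(m^{(k)}n)$, and each product with a fixed $n\times n$ matrix costs $O(n^2)$.

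Next I would check each active part of an iteration against this model. Solving the subproblem~\eqref{lagr_dual_red_ask} amounts to evaluating the projections~\eqref{eq:projker}--\eqref{eq:projran}, the center $p$, the radius $r$, the optimizer $v^*$ of~\eqref{eq:vstar}, and $\lambda^*=-(Q^{-\frac 12}\hat A^\top)^+v^*$; all of these are the admissible products, hence $O(m^{(k)}n)+O(n^2)$. Recovering the primal iterate through Theorem~\ref{Theo:PrimalFromDual} requires forming $\bar x=Q^{-1}(c+\hat A^\top\lambda^*)$ and the scalar $\alpha$, i.e.\ one product with $Q^{-1}$ plus a few inner products, so $O(n^2)$. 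In the dual step, the ascent or unbounded direction and the ratio test of Step~\ref{step:jalpha} in Algorithm~\ref{fig:Ell_AS-DS} touch only the $m^{(k)}$ coordinates and one matrix--vector product, hence $O(m^{(k)}n)$.

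The step I expect to be the crux is the pseudo-inverse update, since recomputing $(A^{(k)}Q^{-\frac 12})^+$ from scratch would cost $O(n^3)$ and destroy the bound. Here I would invoke the incremental formulas derived just above the theorem. For a full-rank row addition one forms the row vectors $h=aQ^{-\frac 12}(A^{(k)}Q^{-\frac 12})^+$ and $v=aQ^{-\frac 12}-hA^{(k)}Q^{-\frac 12}$ and applies the rank-one correction $-\tfrac1{\|v\|^2}v^\top(h\mid -1)$ to an $n\times(m^{(k)}+1)$ array; the rank-deficient subcase merely reads off the direction $(-h\mid 1)^\top$. For a row deletion one applies $-\tfrac1{\|w\|^2}ww^\top(A^{(k)}Q^{-\frac 12})^+$ and drops a column. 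Each of these consists of a constant number of the admissible products, so the update is $O(m^{(k)}n)$ (the single dense product $aQ^{-\frac 12}$ in the addition case being $O(n^2)$).

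Putting the pieces together, the total work in iteration $k$ is $O(n^2)+O(m^{(k)}n)$. Substituting $m^{(k)}\le n+1$ from Lemma~\ref{lemma:n+1steps} turns the second term into $O\big((n+1)n\big)=O(n^2)$, and the claim follows. The only cost excluded from this estimate is the separation oracle invoked in Step~\ref{Step:separation} of Algorithm~\ref{fig:Ell_AS-PS}, whose running time is problem-specific and is therefore accounted for separately.
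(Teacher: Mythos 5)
Your proof is correct and follows the same route as the paper: the paper's own proof simply combines Lemma~\ref{lemma:n+1steps} ($m^{(k)}\le n+1$) with the preceding discussion establishing the $O(m^{(k)}n)$ per-iteration cost via the incremental Sherman--Morrison-type updates of $(A^{(k)}Q^{-\frac12})^+$, which is exactly your decomposition. You have merely written out in explicit detail (cost model, subproblem formulas, ratio test, oracle exclusion) what the paper compresses into ``the discussion above.''
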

\begin{proof}
  This follows directly from Lemma~\ref{lemma:n+1steps} and the
  discussion above.
  \qed
\end{proof}

Clearly, the incremental update of the
pseudo-inverse~$(A^{(k)}Q^{-\frac 1 2})^+$ may cause numerical
errors. This can be avoided by recomputing it from scratch after a
certain number of incremental updates. Instead of a fixed number of
iterations, we recompute $(A^{(k)}Q^{-\frac 1 2})^+$ whenever the
primal solution computed in a primal step is infeasible, i.e.,
violates the current constraints, where we allow a small tolerance.

In order to avoid wrong solutions even when pseudo-inverses are not
precise, we make sure in our implementation that the dual
solution~$\lambda^{(k)}$ remains feasible for~\eqref{D-AS} in each
iteration, no matter how big the error of $(A^{(k)}Q^{-\frac 1 2})^+$
is. For this, we slightly change the computation of
$\tilde\lambda^{(k)}$: after computing~$\tilde\lambda^{(k)}$ exactly
as explained, we determine the largest~$\delta\in\R$ such
that~$(1-\delta)\lambda^{(k-1)}+\delta \tilde\lambda^{(k)}$ is dual
feasible. Such~$\delta$ must exist since~$\lambda^{(k-1)}$ is dual
feasible, and it can easily be computed using the midnight formula. We
then replace~$\tilde\lambda^{(k)}$ by
$(1-\delta)\lambda^{(k-1)}+\delta \tilde\lambda^{(k)}$ and go on as
before.

\section{Branch-and-Bound Algorithm}\label{sec:bb}

For solving
the integer Problem~\eqref{RP}, the method
presented in the previous sections must be embedded into a
branch-and-bound scheme. The dual bounds are computed by
Algorithm~\texttt{EllAS} and the branching is done by splitting up
the domain~$[l_i,u_i]$ of some variable~$x_i$. Several properties of
Algorithm~\texttt{EllAS} can be exploited to improve the performance
of such a branch-and-bound approach.

\paragraph{Warm starts}

Clearly, as branching adds new constraints to the primal feasible
region of the problem, while never extending it, all dual solutions
remain feasible. In every node of the branch-and-bound-tree, the
active set algorithm can thus be warm started with the optimal set of
constraints of the parent node. As 
in~\cite{buchheim:2016,buchheim:2018}, this leads to a significant reduction
of the number of iterations compared to a cold start. 
Moreover, the newly introduced bound constraint is always violated and can be
directly added as a new active constraint, which avoids resolving the
same dual problem and hence saves one more iteration per
node. Finally, the data describing the problem can either be inherited
without changes or updated quickly; this is particularly important for
the pseudo-inverse~$(AQ^{-\frac 12})^+$.

\paragraph{Early pruning}

Since we compute a valid dual bound for Problem~\eqref{RP} in every iteration of
Algorithm~\texttt{EllAS}, we may prune a subproblem as soon as the
current bound exceeds the value of the best known feasible solution.

\paragraph{Avoiding cycling or tailing off}

Last but not least, we may also stop Algorithm~\texttt{EllAS} at every point
without compromising the correctness of the branch-and-bound algorithm. In particular,
we can stop as soon as an iteration of Algorithm~\texttt{EllAS} does not give a
strict (or a significant) improvement in the dual bound. In
particular, this avoids cycling.

% \bigskip
% \noindent
% For obtaining a correct result with this approach, the
% total set of constraints~$Ax\le b$ being separated by the oracle must satisfy $X=\{x\in\Z^n\mid
% Ax\le b\}$ (in the pure integer case). It is not necessary to
% have~$\conv X=\{x\in\R^n\mid Ax\le b\}$ in this case, even though this
% leads to stronger bounds and hence may improve the running time.

\section{Numerical Results}\label{sec:num}

To test the performance of our algorithm~\texttt{EllAS}, we considered
random binary instances with up to one million constraints
(Section~\ref{sec:num-rand}) as well as combinatorial instances of
Problem~\eqref{RP}, where the underlying problem is the Shortest Path
problem (Section~\ref{sec:num-sp}), the Assignment problem
(Section~\ref{sec:num-ap}), the Spanning Tree problem
(Section~\ref{sec:num-stp}), and the Traveling Salesman problem
(Section~\ref{sec:num-tsp}).  Concerning our approach, these
combinatorial problems have different characteristics: while the first
two problems have compact and complete linear formulations, the
standard models for the latter problems use an exponential number of
constraints that can be separated efficiently. In the case of the
Spanning Tree problem, this exponential set of constraints again
yields a complete linear formulation, while this is not the case for
the NP-hard Traveling Salesman problem. In the latter case, however,
we still have a complete integer programming formulation, which
suffices for the correctness of our approach.

For all problems, we consider instances where the positive definite matrix $Q\in \R^{n\times
  n}$ is randomly generated. For this, we chose $n$ eigenvalues $\lambda_i$ uniformly at random
from $[0,1]$ and orthonormalized $n$ random vectors $v_i$, each entry
of which was chosen uniformly at random from $[-1,1]$, then we set
$Q=\sum_{i=1}^n\lambda_iv_iv_i^{\top}$. For the random binary
instances, the entries of~$c$ were chosen uniformly at random from
$[-1,1]$, while for all remaining instances the vector~$c$ was
uniformly one.

In the following, we present a comparison of \texttt{BB-EllAS}, a~C++
implementation of the branch-and-bound-algorithm based
on~\texttt{EllAS}, with the MISOCP solver of Gurobi~7.5.1~\cite{gurobi}. According
to the latest benchmark results of Hans~D. Mittelmann~\cite{mittelmann}, Gurobi
is currently the fastest solver for MISOCPs. We use Gurobi with
standard settings, except that we use the same optimality tolerance as
in \texttt{BB-EllAS}, setting the absolute optimality tolerance
\texttt{MIPGapAbs} to $10^{-4}$.  All other standard parameters are
unchanged. In particular, Gurobi uses presolve techniques that
decrease the solution times significantly.  In case of the Spanning
Tree problem and the Traveling Salesman problem, we apply dynamic
separation algorithms using a callback adding lazy constraints.

All our experiments were carried out on Intel Xeon processors running
at 2.60~GHz. All running times were measured in CPU seconds and the
time-limit was set to one CPU hour for each individual instance. All
tables presented in this section include the following data for the
comparison between~\texttt{BB-EllAS} and Gurobi: the number of
instances solved within the time limit, the average running time, and
the average number of branch-and-bound nodes. For~\texttt{BB-EllAS}, we
also report the average total number of active set iterations and the
average number of times the pseudo-inverse $({A^{(k)}{Q^{-\frac 1
      2}}})^+$ is recomputed from scratch, the latter in percentage with respect
to the number of iterations. All averages are taken over the set of
instances solved within the time limit.
For all applications, we also
present performance profiles, as proposed in~\cite{DM2002}.
Given our set of solvers $\mathcal{S}$=\{\texttt{BB-EllAS}, Gurobi\} and a set of problems $\mathcal{P}$, 
we compare the performance of  a solver $s \in \mathcal{S}$ on problem $p \in \mathcal{P}$ 
against the best performance obtained by any solver in $\mathcal{S}$ 
on the same problem. To this end we define the performance ratio
$
%\[
r_{p,s} = t_{p,s}/\min\{t_{p,s^\prime}: s^\prime \in\mathcal{S}\},
%\]
$
where $t_{p,s}$ is the computational time, and we consider a cumulative distribution
function
\[
\rho_s(\tau) = |\{p\in \mathcal{P}:\; r_{p,s}\leq \tau \}| /|\mathcal{P}|.
\]
The performance profile for $s \in S$ is the plot of the function $\rho_s$.
\subsection{Random Instances}\label{sec:num-rand}

For a first comparison, we consider instances of
Problem~\eqref{ellvILP} where the objective function vector $c\in
\R^n$ and the positive definite matrix $Q\in \R^{n\times n}$ are
randomly generated as described above. The set $P$ is explicitely
given as~$\{x\in \R^n \mid Ax \leq b\}$, where $A\in \R^{m\times n}$
and $b\in \R^m$ are also randomly generated: the entries of~$A$ were
chosen uniformly at random from the integers in the range~$[0,10]$ and $b$ was defined by
$b_i=\lfloor\frac 12{\sum_{j=1}^na_{ij}}\rfloor$, $i=1,\dots,m$.  Altogether, we
generated 160 different problem instances for~\eqref{ellvILP}: for
each combination of $n\in\{25,50,100,200\}$ and
$m\in\{10^3,10^4,10^5,10^6\}$, we generated 10 instances.  Since the
set $P$ is explicitely given here, the linear constraints are
separated by enumeration in~\texttt{BB-EllAS}.  More precisely, at
Step~\ref{Step:separation} of Algorithm~\ref{fig:Ell_AS-PS}, we pick
the linear constraint most violated by $x^{(k)}$.  We report our
results in Table~\ref{Tab:rnd01}.
\begin{table}
  \caption{Comparison on random binary instances.}\label{Tab:rnd01} 
  \centering 
  \begin{tabular}{|rr| r r r r r| r r r|} 
\hline
 & & \multicolumn{5}{c|}{BB-EllAS} & \multicolumn{3}{c|}{Gurobi} \\
  $n$ & $m$ & \#sol & time & nodes & iter & \%ps & \#sol & time & nodes \\
\hline
  25 & $10^3$ & 10 &     0.00 &  3.9e+01 &  2.5e+02 &  0.12 & 10 &     0.76 &  1.3e+01 \\
  25 & $10^4$ & 10 &     0.03 &  6.5e+01 &  4.9e+02 &  0.45 & 10 &     9.39 &  2.1e+01 \\
  25 & $10^5$ & 10 &     0.60 &  1.0e+02 &  9.7e+02 &  1.22 & 10 &   156.63 &  2.8e+01 \\
  25 & $10^6$ & 10 &    16.91 &  2.5e+02 &  2.5e+03 &  0.85 & 10 &  1973.87 &  8.6e+01 \\
\hline
  50 & $10^3$ & 10 &     0.01 &  6.3e+01 &  3.5e+02 &  0.54 & 10 &     0.96 &  1.1e+01 \\
  50 & $10^4$ & 10 &     0.05 &  6.7e+01 &  4.4e+02 &  0.62 & 10 &    11.93 &  1.8e+01 \\
  50 & $10^5$ & 10 &     0.85 &  7.8e+01 &  6.8e+02 &  1.13 & 10 &   246.32 &  2.1e+01 \\
  50 & $10^6$ & 10 &    18.84 &  1.7e+02 &  1.7e+03 &  1.22 &  0 &      --- &      --- \\
\hline
 100 & $10^3$ & 10 &     0.40 &  1.3e+02 &  8.3e+02 &  6.79 & 10 &     2.35 &  2.5e+01 \\
 100 & $10^4$ &  9 &     0.36 &  1.6e+02 &  1.1e+03 &  1.78 & 10 &    27.51 &  7.7e+01 \\
 100 & $10^5$ &  9 &     4.26 &  2.6e+02 &  2.0e+03 &  1.60 & 10 &   761.07 &  2.3e+02 \\
 100 & $10^6$ &  7 &    94.88 &  4.9e+02 &  4.8e+03 &  3.68 &  0 &      --- &      --- \\
\hline
 200 & $10^3$ & 10 &     1.00 &  1.3e+02 &  7.6e+02 &  3.98 & 10 &     4.55 &  3.1e+01 \\
 200 & $10^4$ &  8 &     2.04 &  1.6e+02 &  1.3e+03 &  3.92 & 10 &    23.63 &  4.1e+01 \\
 200 & $10^5$ &  9 &    11.84 &  3.1e+02 &  2.6e+03 &  3.15 & 10 &   899.84 &  1.3e+02 \\
 200 & $10^6$ &  7 &    61.25 &  1.9e+02 &  1.4e+03 & 14.11 &  0 &      --- &      --- \\
\hline
  \end{tabular}
\end{table}

From the results in Table~\ref{Tab:rnd01}, note that the average
number of branch-and-bound nodes enumerated by~\texttt{BB-EllAS} is
generally larger than the number of nodes needed by Gurobi, but always
by less than a factor of~10 on average. However, in terms of running
times,~\texttt{BB-EllAS} outperforms Gurobi on all instance types
except for the larger instances with a medium number of constraints,
i.e., for~$n\in\{100,200\}$ and $m\in\{10^4,10^5\}$. On all other
instance classes,~\texttt{BB-EllAS} either solves significantly more
instances than Gurobi within the time limit or has a faster running
time by many orders of magnitude.  This in confirmed by the
performance profiles presented in Figure~\ref{fig:perfprof_rand}. The low
number of iterations performed by~\texttt{EllAS} per node (less than
10 on average) highlights the benefits of using warmstarts.

\begin{figure}[h!]
  \begin{center}
    \psfrag{GUROBI}[lc][lc]{\scriptsize\texttt{Gurobi}}
    \psfrag{BB-EllAS}[lc][lc]{\scriptsize\texttt{BB-EllAS}}
    \includegraphics[trim = 1.5cm 0cm 1.5cm 0mm, clip, width=0.85\textwidth]{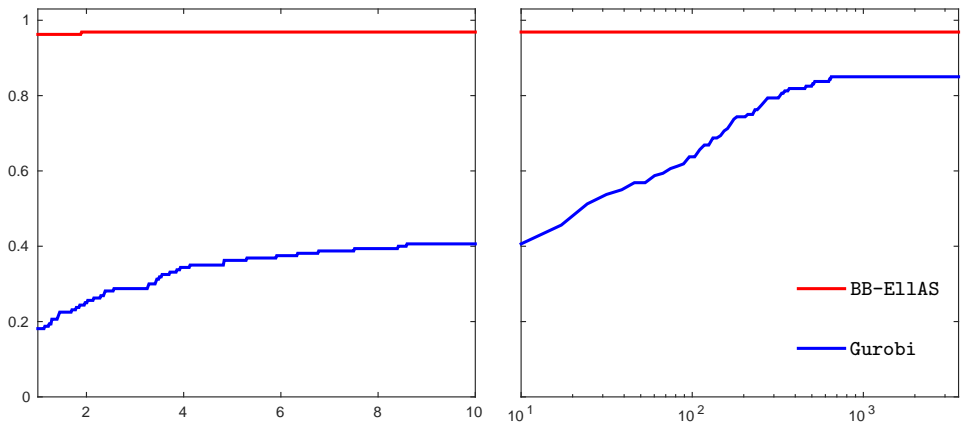}\\
    \caption{Performance profile with respect to running times for
      random binary instances.}
    \label{fig:perfprof_rand}
  \end{center}
\end{figure}

% \begin{table}
%   \caption{Comparison on random integer instances with upper bounds 10.}\label{Tab:rnd010} 
%   \centering 
%   \begin{tabular}{|rr| r r r r r| r r r|} 
% \hline
%  & & \multicolumn{5}{c|}{BB-EllAS} & \multicolumn{3}{c|}{Gurobi} \\
%   $n$ & $m$ & \#sol & time & nodes & iter & \%ps & \#sol & time & nodes \\
% \hline
% \hline
%   \end{tabular}
% \end{table}

% For what concerns the results on instances with $[l, u] = [0, 10]^n$, shown in Table~\ref{Tab:rnd010},
% we notice how~\texttt{BB-EllAS} suffers from the increase in the number of variables:
% when $n = 100$,~\texttt{CPLEX 12.6} is able to solve two more instances than~\texttt{BB-EllAS} within the time limit.
% However, it is also clear that~\texttt{BB-EllAS} is able to deal with problems with a big number of constraints
% very efficiently: when $m = 10^6$,~\texttt{BB-EllAS} is able to solve three more instances than~\texttt{CPLEX 12.6}
% within one hour CPU time, emphasizing the benefit of dealing only with a subset of constraints in~\texttt{EllAS}.

\subsection{Shortest Path Problem}\label{sec:num-sp}

Given a directed graph $G =(V,E)$, where
$V$ is the set of vertices
and $E$ is the set of edges, and weights
associated with each edge, the Shortest Path problem 
is the problem of finding a path between two vertices~$s$ and~$t$ such 
that the sum of the weights of its constituent edges is minimized.
Our approach uses the following flow based formulation of the Robust
Shortest Path problem:
\begin{equation}\label{sp}
  \begin{array}{lrcll}
    \min~ & \multicolumn{4}{l}{c^\top x+\sqrt{x^ \top Qx}}\\[1.5ex]
    \textnormal{ s.t.} & \sum_{e  \in \delta^+ (i)}x_e - \sum_{e  \in \delta^- (i)}x_e & = & 0 & \forall i \in V \setminus \{s,t\} \\[1ex]
    & \sum_{e  \in \delta^+ (s)} x_e - \sum_{e  \in \delta^- (s)} x_e & = & 1 & \\[1ex]
    & \sum_{e  \in \delta^+ (t)} x_e - \sum_{e  \in \delta^- (t)} x_e & = & -1 & \\[1ex]
    & x & \in & \{0,1\}^{E}
  \end{array}
\end{equation}
In our test set, we
produced squared grid graphs with $r$ rows and columns, where all
edges point from left to right and from top to bottom. In this way, we
produced graphs with $|V| = r^2$ vertices and $|E| = 2 r^2 -2r$
edges. In the IP model~\eqref{sp}, we thus have $n:=2 r^2 -2r$ variables
and $m:=|E|+2|V|=4 r^2 -2r$ many inequalities, taking into account
also the box constraints $x_e\in[0,1]$ for $e\in E$. Since this number
is polynomial in~$n$, we can separate them by enumeration
within~\texttt{EllAS}, whereas we can pass the formulation~\eqref{sp} to Gurobi directly.
Concerning the objective function of~\eqref{sp}, we set all expected
lenghts~$c_i$ to~$1$ and built the positive definite matrix~$Q$ as described above.
Altogether, we generated 100 different problem instances for~\eqref{sp}:
for each $r\in\{5,\ldots,14\}$ we generated 10 instances.

\begin{table}
  \caption{Comparison on robust shortest path instances.}\label{Tab:sp} 
  \centering 
  \begin{tabular}{|rrr| r r r r r| r r r|} 
\hline
 &&& \multicolumn{5}{c|}{BB-EllAS} & \multicolumn{3}{c|}{Gurobi} \\
  $r$ & $n$ & $m$ & \#sol & time & nodes & iter & \%ps & \#sol & time & nodes \\
\hline
  5  &  40 &   90 & 10 &     0.01 &  4.4e+01 &  4.9e+02 &  0.00 & 10 &     0.17 &  5.0e+01  \\   
  6  &  60 &  132 & 10 &     0.04 &  1.2e+02 &  1.5e+03 &  0.45 & 10 &     0.65 &  1.4e+02  \\   
  7  &  84 &  182 & 10 &     1.26 &  3.8e+02 &  5.4e+03 &  5.51 & 10 &     2.29 &  3.5e+02  \\   
  8  & 112 &  240 & 10 &     2.52 &  9.7e+02 &  1.6e+04 &  1.68 & 10 &     8.86 &  8.9e+02  \\   
  9  & 144 &  306 & 10 &     5.71 &  2.5e+03 &  4.7e+04 &  0.42 & 10 &    92.12 &  3.0e+03  \\   
  10 & 180 &  380 & 10 &    67.68 &  6.0e+03 &  1.3e+05 &  1.99 & 10 &   349.19 &  6.6e+03  \\   
  11 & 220 &  462 & 10 &   214.08 &  1.9e+04 &  4.4e+05 &  0.93 & 10 &  1294.86 &  1.8e+04  \\   
  12 & 264 &  552 & 10 &   659.91 &  4.8e+04 &  1.3e+06 &  0.47 &  1 &  1682.57 &  2.1e+04  \\   
  13 & 312 &  650 &  3 &  2925.07 &  1.2e+05 &  3.4e+06 &  0.51 &  0 &      --- &      ---  \\   
  14 & 364 &  756 &  0 &      --- &      --- &      --- &   --- &  0 &      --- &      ---  \\   
\hline
  \end{tabular}
\end{table}

In Table~\ref{Tab:sp}, we report the comparison between~\texttt{BB-EllAS} and the MISOCP solver of~Gurobi.
The average number of branch-and-bound nodes needed in~\texttt{BB-EllAS} is 
in the same order of magnitude of that needed by~Gurobi.
However,~\texttt{EllAS} is able to process the nodes very quickly, leading to a branch-and-bound 
scheme that outperforms~Gurobi in terms of computational time. 
Note also that for graphs with $r=13$,~Gurobi does not solve any instance within one hour CPU time, 
while~\texttt{BB-EllAS} is able to solve $3$ of them.
Both solvers fail for instances with $r\ge 14$. See
Figure~\ref{fig:perfprof_sp} for the performance profiles.

\begin{figure}[h!]
  \begin{center}
    \psfrag{GUROBI}[lc][lc]{\scriptsize\texttt{Gurobi}}
    \psfrag{BB-EllAS}[lc][lc]{\scriptsize\texttt{BB-EllAS}}
    \includegraphics[trim = 1.5cm 0cm 1.5cm 0mm, clip, width=0.85\textwidth]{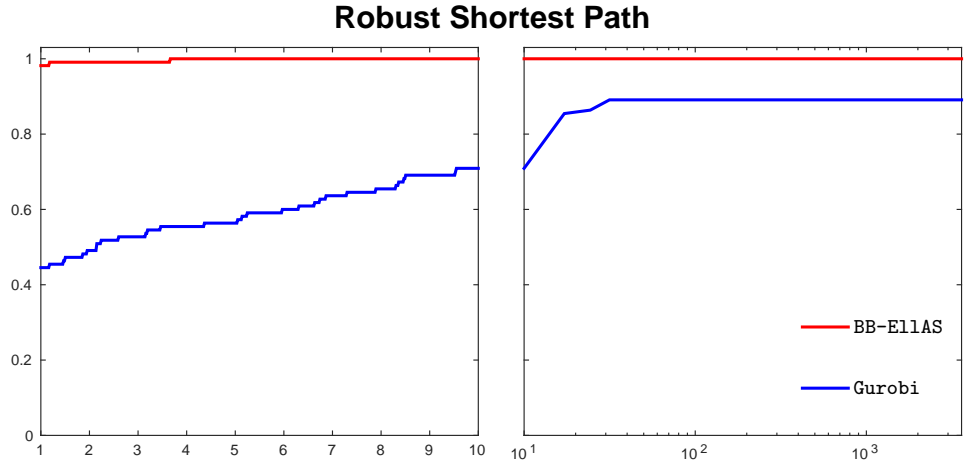}\\
    \caption{Performance profile with respect to running times for shortest path instances.}
    \label{fig:perfprof_sp}
  \end{center}
\end{figure}

\subsection{Assigment Problem}\label{sec:num-ap}

Given an undirected, bipartite and weighted graph $G =(V,E)$ with
bipartition~$V=V_1\cup V_2$, the Assignment problem consists in
finding a one-to-one assignment from the nodes in~$V_1$ to the nodes
in~$V_2$ such that the sum of the weights of the edges used for the assignment
is minimized. In other words, we search for a minimum-weight perfect
matching in the bipartite graph~$G$.  Our approach uses the following
standard formulation of the Assignment problem:
\begin{equation*}%\label{ap}
  \begin{array}{lrcll}
    \min~ & \multicolumn{4}{l}{c^\top x+\sqrt{x^ \top Qx}}\\[1.5ex]
    \textnormal{ s.t.} & \sum_{e\in \delta(i)}x_e & = & 1 & \forall i\in V \\[1ex]
    & x & \in & \{0,1\}^{E}
  \end{array}
\end{equation*}
We consider complete bipartite graphs, so that the number of variables
is $n=\tfrac 14 |V|^2$. The number of constraints including $x\ge 0$
is $m=|V|+n$. Note that in the bipartite case the above formulation
yields a complete description of~$\text{conv}(P\cap \Z^n)$, which is not true in the
case of general graphs. In our instances, we use expected weights~$1$
again, while the non-linear part of the objective function is
generated as before.  Altogether, we generated
80 different problem instances: for each $|V|\in\{10,12,\ldots,24\}$ we
generated 10 different instances.
Results are presented in Table~\ref{tab:ap} and Figure~\ref{fig:perfprof_ap}. The general picture is
very similar to the one for the Shortest Path problem.

\begin{table}
  \caption{Comparison on robust assignment instances.}\label{tab:ap}
  \centering 
  \begin{tabular}{|rrr| r r r r r| r r r|} 
\hline
 &&& \multicolumn{5}{c|}{BB-EllAS} & \multicolumn{3}{c|}{Gurobi} \\
  $|V|$ & $n$ & $m$ & \#sol & time & nodes & iter & \%ps & \#sol & time & nodes \\
\hline
  10 &  25 &  35 & 10 &     0.00 &  7.9e+01 &  6.2e+02 &  0.07 & 10 &     0.11 &  8.3e+01 \\
  12 &  36 &  48 & 10 &     0.01 &  2.6e+02 &  2.3e+03 &  0.07 & 10 &     0.48 &  2.8e+02 \\
  14 &  49 &  63 & 10 &     0.11 &  1.1e+03 &  9.9e+03 &  0.09 & 10 &     2.83 &  1.1e+03 \\
  16 &  64 &  80 & 10 &     1.34 &  4.5e+03 &  4.8e+04 &  0.73 & 10 &    29.98 &  4.4e+03 \\
  18 &  81 &  99 & 10 &    11.90 &  2.6e+04 &  3.0e+05 &  0.49 & 10 &   198.47 &  2.0e+04 \\
  20 & 100 & 120 & 10 &   112.16 &  1.2e+05 &  1.5e+06 &  0.85 & 10 &  1521.62 &  1.0e+05 \\
  22 & 121 & 143 &  9 &   669.68 &  6.6e+05 &  8.9e+06 &  0.36 &  0 &      --- &      --- \\
  24 & 144 & 168 &  0 &      --- &      --- &      --- &   --- &  0 &      --- &      --- \\
\hline
  \end{tabular}
\end{table}

\begin{figure}[h!]
  \begin{center}
    \psfrag{GUROBI}[lc][lc]{\scriptsize\texttt{Gurobi}}
    \psfrag{BB-EllAS}[lc][lc]{\scriptsize\texttt{BB-EllAS}}
    \includegraphics[trim = 1.5cm 0cm 1.5cm 0mm, clip, width=0.85\textwidth]{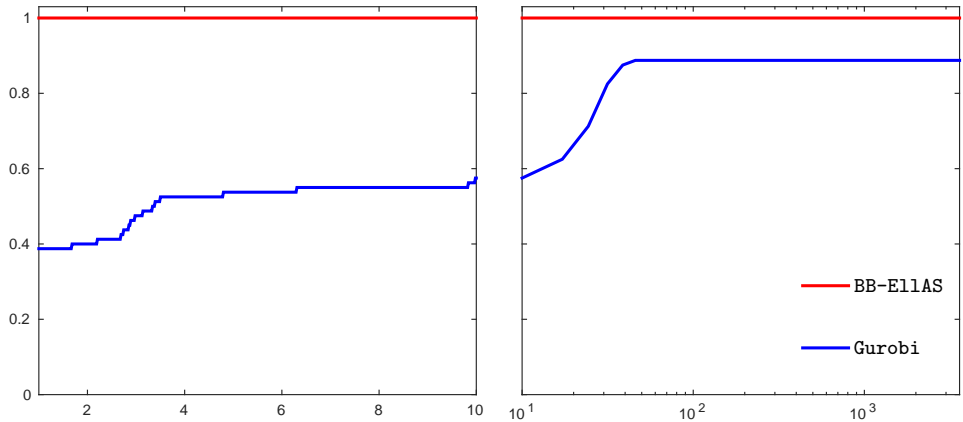}\\
    \caption{Performance profile with respect to running times for assignment instances.}
    \label{fig:perfprof_ap}
  \end{center}
\end{figure}

\subsection{Spanning Tree Problem}\label{sec:num-stp}

Given an undirected weighted graph
$G =(V,E)$, a minimum spanning tree is a subset of 
edges that connects all vertices, 
without any cycles and with the minimum total edge weight. 
Our approach uses the following formulation of the Robust Spanning Tree problem:
\begin{equation}\label{mst}
  \begin{array}{rrcll}
    \min~ & \multicolumn{4}{l}{c^\top x+\sqrt{x^ \top Qx}}\\[1.5ex]
    \textnormal{s.t.} & \sum_{e\in E}x_e & = & |V|-1  \\[1ex]
    & \sum_{e\subseteq X} x_e & \le & |X|-1 & \forall \emptyset\neq X\subseteq V\\[1ex]
    & x & \in & \{0,1\}^{E}
  \end{array}
\end{equation}
In the above model, the number of constraints,
taking into account also the non-negativity constraints, is
$m=2^{|V|}+n$. Since this number is exponential, we also have to use a
separation algorithm for Gurobi. For both \texttt{BB-EllAS}
and Gurobi, we essentially use the same simple implementation based
on the Ford-Fulkerson algorithm.

For our experiments, we considered both complete graphs and grid
graphs, the latter being produced as for the Shortest Path Problem. In
both cases, expected edge weights are set to~$1$ again, while we built
the positive definite matrix~$Q$ as above.  Altogether, we generated
90 different problem instances: for each $|V|\in\{10,\ldots,14\}$ we
generated 10 different complete instances, while for each
$r\in\{5,\ldots,8\}$ we generated 10 different grid instances. As shown
in Tables~\ref{tab:mst} and~\ref{tab:mstgrid},~\texttt{BB-EllAS}
clearly outperforms the MISOCP solver of~Gurobi on all the instances
considered. For the performance profiles, see
Figure~\ref{fig:perfprof_mst}.

\begin{table}
  \caption{Comparison on robust minimum spanning tree instances (complete graphs).}\label{tab:mst}
  \centering 
  \begin{tabular}{|rrr| r r r r r |r r r |} 
    \hline
 &&& \multicolumn{5}{c|}{BB-EllAS} & \multicolumn{3}{c|}{Gurobi} \\
  $|V|$ & $n$ & $m$ & \#sol & time & nodes & iter & \%ps & \#sol  & time & nodes \\
\hline
  10 &  45 &   1,069 & 10 &     2.93 &  1.4e+04 &  9.6e+04 &  1.35 & 10 &    78.59 &  1.6e+04 \\
  11 &  55 &   2,103 & 10 &    11.92 &  5.7e+04 &  4.3e+05 &  0.16 & 10 &   794.29 &  7.0e+04 \\
  12 &  66 &   4,162 & 10 &   120.84 &  4.4e+05 &  3.7e+06 &  0.06 &  1 &  2652.38 &  1.4e+05 \\
  13 &  78 &   8,270 & 10 &  1060.63 &  2.7e+06 &  2.4e+07 &  0.12 &  0 &      --- &      --- \\
  14 &  91 &  16,475 &  0 &      --- &      --- &      --- &   --- &  0 &      --- &      --- \\
\hline
  \end{tabular}
\end{table}

\begin{table}
  \caption{Comparison on robust minimum spanning tree instances (grid graphs).}\label{tab:mstgrid}
  \centering 
  \begin{tabular}{|rrr| r r r r r |r r r |} 
    \hline
 &&& \multicolumn{5}{c|}{BB-EllAS} & \multicolumn{3}{c|}{Gurobi} \\
  $r$ & $n$ & $m$ & \#sol & time & nodes & iter & \%ps & \#sol & time & nodes \\
\hline
  5  &  40 & 3.4e+07 & 10 &     0.50 &  1.0e+03 &  8.7e+03 &  0.10 & 10 &    61.35 &  7.3e+03  \\   
  6  &  60 & 6.9e+10 & 10 &    18.64 &  1.1e+04 &  1.2e+05 &  0.67 &  8 &  1805.72 &  9.2e+04  \\   
  7  &  84 & 5.6e+14 &  9 &  1038.24 &  2.3e+05 &  3.3e+06 &  0.38 &  0 &      --- &      ---  \\   
  8  & 112 & 1.8e+19 &  0 &      --- &      --- &      --- &   --- &  0 &      --- &      ---  \\   
\hline
  \end{tabular}
\end{table}

\begin{figure}[h!]
  \begin{center}
    \psfrag{GUROBI}[lc][lc]{\scriptsize\texttt{Gurobi}}
    \psfrag{BB-EllAS}[lc][lc]{\scriptsize\texttt{BB-EllAS}}
    \includegraphics[trim = 1.5cm 0cm 1.5cm 0mm, clip, width=0.85\textwidth]{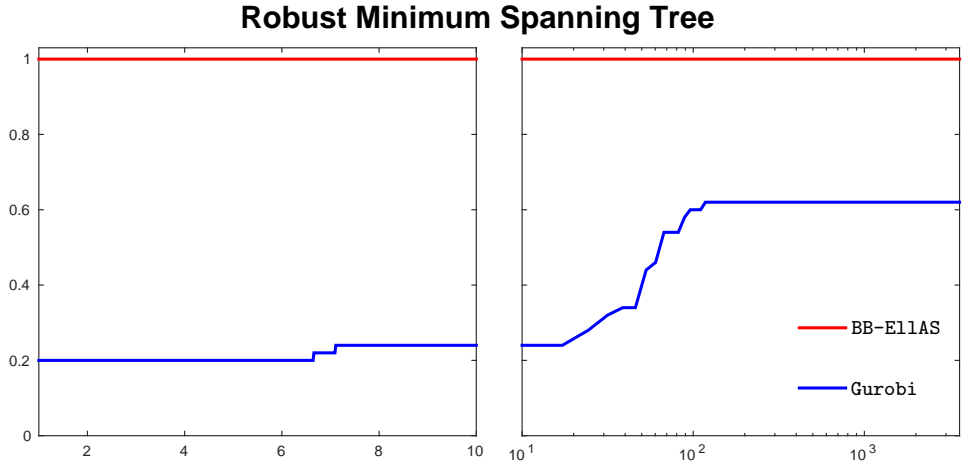}\\
    \caption{Performance profile with respect to running times for spanning tree instances.}
    \label{fig:perfprof_mst}
  \end{center}
\end{figure}

\subsection{Traveling Salesman Problem}\label{sec:num-tsp}

Given an undirected, complete and weighted graph $G =(V,E)$, the Traveling Salesman problem consists in
finding a path starting and ending at a given vertex~$v\in V$ such that all the vertices 
in the graph are visited exactly once and the sum of the weights of its constituent edges is minimized.
Our approach uses the following formulation of the Traveling Salesman problem:
\begin{equation}\label{tsp}
  \begin{array}{rrcll}
    \min~ & \multicolumn{4}{l}{c^\top x+\sqrt{x^ \top Qx}}\\[1.5ex]
    \textnormal{s.t.} & \sum_{e\in \delta(i)}x_e & = & 2 & \forall i\in V \\[1ex]
    & \sum_{e\in \delta(X)} x_e & \ge & 2 & \forall \emptyset\neq X\subsetneq V\\[1ex]
    & x & \in & \{0,1\}^{E}
  \end{array}
\end{equation}
Again, we consider complete graphs. The number of constraints
including the bounds $x\in[0,1]^E$ is $m=2^{|V|}+3n-2$ and hence again
exponential.  For both \texttt{BB-EllAS} and Gurobi, we basically use
the same separation algorithm as for the Spanning Tree problem; see
Section~\ref{sec:num-stp}. Instances are identical to those generated
for the Spanning Tree problem, but we can consider slightly larger
graphs, namely graphs with $|V|\in\{10,\ldots,16\}$. See
Table~\ref{tab:tsp} and Figure~\ref{fig:perfprof_tsp} for the results.

\begin{table}
  \caption{Comparison on robust traveling salesman instances.}\label{tab:tsp}
  \centering 
  \begin{tabular}{|rrr| r r r r r| r r r|} 
\hline
 &&& \multicolumn{5}{c|}{BB-EllAS} & \multicolumn{3}{c|}{Gurobi} \\
  $|V|$ & $n$ & $m$ & \#sol & time & nodes & iter & \%ps & \#sol & time & nodes \\
\hline
  10 &  45 &  1,157 & 10 &     0.70 &  3.5e+03 &  2.5e+04 &  1.73 & 10 &    15.50 &  3.3e+03 \\
  11 &  55 &  2,211 & 10 &     2.37 &  1.6e+04 &  1.3e+05 &  0.18 & 10 &    69.96 &  1.2e+04 \\
  12 &  66 &  4,292 & 10 &    17.59 &  9.6e+04 &  8.2e+05 &  0.05 & 10 &   637.47 &  7.1e+04 \\
  13 &  78 &  8,424 & 10 &   150.00 &  5.4e+05 &  4.8e+06 &  0.14 &  3 &  2324.42 &  2.3e+05 \\
  14 &  91 & 16,655 & 10 &  1087.76 &  2.5e+06 &  2.4e+07 &  0.25 &  0 &      --- &      --- \\
  15 & 105 & 33,081 &  1 &  2966.10 &  6.2e+06 &  6.0e+07 &  0.06 &  0 &      --- &      --- \\
  16 & 120 & 65,894 &  0 &      --- &      --- &      --- &   --- &  0 &      --- &      --- \\
\hline
  \end{tabular}
\end{table}

\begin{figure}[h!]
  \begin{center}
    \psfrag{GUROBI}[lc][lc]{\scriptsize\texttt{Gurobi}}
    \psfrag{BB-EllAS}[lc][lc]{\scriptsize\texttt{BB-EllAS}}
    \includegraphics[trim = 1.5cm 0cm 1.5cm 0mm, clip, width=0.85\textwidth]{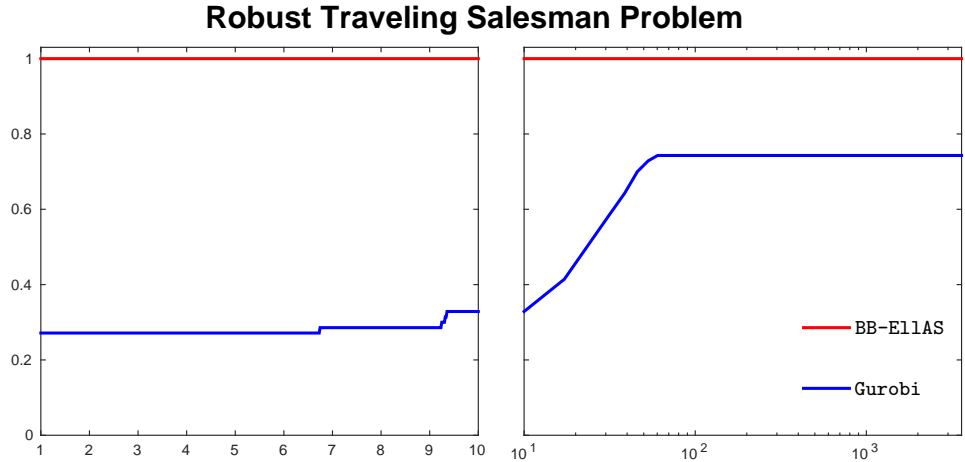}\\
    \caption{Performance profile with respect to running times for traveling salesman instances.}
    \label{fig:perfprof_tsp}
  \end{center}
\end{figure}

\section{Conclusions}\label{sec:conc}
We presented a new branch-and-bound algorithm for robust combinatorial optimization problems 
under ellipsoidal uncertainty. We assume that the set of feasible solutions is given by a separation
algorithm that decides whether a given point belongs to the convex
hull of the feasible set or not, and, in the negative case, provides
a valid but violated inequality. The branch-and-bound algorithm is based on the use of an active set method for the 
computation of dual bounds.  Dealing with the Lagrangian dual of the continuous relaxation 
has the advantage of allowing an early pruning of the node.
The closed form solution of the active set subproblems, the smart update of pseudo-inverse matrices,
as well as the possibility of using warmstarts, leads to an algorithm 
that clearly outperforms the mixed-integer SOCP solver of Gurobi on the problem instances considered,
including the robust counterpart of the shortest path and the traveling salesman problem.

\section{Acknowledgments}
 
The first author acknowledges support within the project 
``Mixed-Integer Non Linear Optimisation: Algorithms and Applications'', which has received 
funding from the Europeans Union’s EU Framework Programme for Research and Innovation 
Horizon 2020 under the Marie Skłodowska-Curie Actions Grant Agreement No 764759. 
The second author acknowledges support within the project ``Nonlinear Approaches for the Solution 
of Hard Optimization Problems with Integer Variables''(No RP11715C7D8537BA) which has received funding
from Sapienza, University of Rome.

\bibliographystyle{plain}
\bibliography{ellas}

\begin{thebibliography}{10}

\bibitem{mosek}
MOSEK ApS.
\newblock {\em The MOSEK optimization toolbox for MATLAB manual. Version
  8.0.0.81}, 2017.

\bibitem{bental98}
A.~Ben-Tal and A.~Nemirovski.
\newblock Robust convex optimization.
\newblock {\em Math. Oper. Res.}, 23(4):769--805, 1998.

\bibitem{bental99}
A.~Ben-Tal and A.~Nemirovski.
\newblock Robust solutions of uncertain linear programs.
\newblock {\em Oper. Res. Lett.}, 25:1--13, 1999.

\bibitem{bental2001}
A.~Ben-Tal and A.~Nemirovski.
\newblock {\em Lectures on Modern Convex Optimization}.
\newblock SIAM, Philadelphia., 2001.

\bibitem{buchheim:2016}
C.~Buchheim, M.~De~Santis, S.~Lucidi, F.~Rinaldi, and L.~Trieu.
\newblock {A Feasible Active Set Method with Reoptimization for Convex
  Quadratic Mixed-Integer Programming}.
\newblock {\em SIAM Journal on Optimimization}, 26(3):1695--1714, 2016.

\bibitem{buchheim:2018}
C.~Buchheim, M.~De~Santis, F.~Rinaldi, and L.~Trieu.
\newblock {A Frank-Wolfe Based Branch-and-Bound Algorithm for Mean-Risk
  Optimization}.
\newblock {\em Journal of Global Optimization}, 70(3):625--644, 2018.

\bibitem{survey}
C.~Buchheim and J.~Kurtz.
\newblock Robust combinatorial optimization under convex and discrete cost
  uncertainty.
\newblock Technical report, Optimization Online, 2017.

\bibitem{DM2002}
E.~Dolan and J.Mor\'e.
\newblock Benchmarking optimization software with performance profiles.
\newblock {\em Mathematical Programming}, 91:201--213, 2002.

\bibitem{gurobi}
{Gurobi Optimization, Inc.}
\newblock Gurobi optimizer reference manual, 2016.

\bibitem{kouvelis}
P.~Kouvelis and G.~Yu.
\newblock {\em Robust Discrete Optimization and Its Applications}.
\newblock Springer, 1996.

\bibitem{Me2000}
C.~D. Meyer.
\newblock {\em Matrix Analysis and Applied Linear Algebra}.
\newblock SIAM, Philadelphia, 2000.

\bibitem{meyer73}
C.~D. {Meyer Jr.}
\newblock Generalized inversion of modified matrices.
\newblock {\em SIAM Journal on Applied Mathematics}, 24(3):315--323, 1973.

\bibitem{mittelmann}
Hans~D. Mittelmann.
\newblock Mixed-integer {SOCP} {B}enchmark.
\newblock \url{http://plato.asu.edu/ftp/misocp.html} (accessed March 1st, 2018;
  results from December 26th, 2017).

\bibitem{NN1993}
Y.~Nesterov and A.~Nemirovski.
\newblock {\em Interior-point polynomial algorithms in convex programming}.
\newblock SIAM, Philadelphia, 1993.

\bibitem{nikolova}
E.~Nikolova.
\newblock Approximation algorithms for offline risk-averse combinatorial
  optimization.
\newblock Technical report, 2010.

\bibitem{NW2006}
J.~Nocedal and S.~Wright.
\newblock {\em Numerical Optimization (Second Edition)}.
\newblock Springer-Verlag, New York, 2006.

\bibitem{S98guide}
J.F. Sturm.
\newblock Using {SeDuMi} 1.02, a {MATLAB} toolbox for optimization over
  symmetric cones.
\newblock {\em Optimization Methods and Software}, 11--12:625--653, 1999.
\newblock Version 1.05 available from {\texttt{http://fewcal.kub.nl/sturm}}.

\end{thebibliography}

\end{document}